\documentclass[aip,jmp,preprint,floatfix,amsmath,amssymb]{revtex4-2}

\usepackage{amsthm}
\usepackage{graphicx}
\usepackage{subcaption}
\allowdisplaybreaks
\theoremstyle{plain}
\newtheorem{theorem}{Theorem}
\newtheorem{lemma}{Lemma}
\newtheorem{corollary}{Corollary}
\newtheorem{proposition}{Proposition}
\theoremstyle{remark}
\newtheorem{remark}{Remark}

\newcommand{\lam}{\lambda}
\newcommand{\kap}{\kappa}

\begin{document}

\title{Nonlinear springs with orbits in radicals: a solvability criterion}

\author{Unnikrishna Pillai}
\email{up240@nyu.edu}
\affiliation{Department of Electrical and Computer Engineering, New York University,
Brooklyn, NY 11201, USA}

\keywords{nonlinear oscillator, perfect-square spring, closed-form orbit, solvability by
radicals, monodromy group, Galois group}

\begin{abstract}
An odd polynomial oscillator has a periodic orbit given by a quadrature. When the zeros of the
energy polynomial are all simple or all of even multiplicity, that quadrature is elementary
exactly when the polynomial is a perfect square, and the time is a sum of inverse tangents.
Recovering the displacement inverts that relation, algebraic when their coefficients are
commensurate, making a closed form a Galois question. This paper identifies the group as the
monodromy group of a rational map, computes its branch data from the parameters, and shows
the branch value at the phase-angle singularity never vanishes, so every decomposition pairs the
amplitudes reciprocally. In the commensurate family with distinct real factors, the orbit is
radical exactly when the primitive inversion degree is at most four, or when reciprocal pairs
leave a reduced spring of that degree. Radical orbits occur at the fifth, ninth, thirteenth and
seventeenth force orders, the last only for reciprocal members, an expression recorded for each. The multiplicity hypothesis cannot be dropped. Factoring the energy polynomial into square
and squarefree parts turns the quadrature into a differential of the third kind on a hyperelliptic
curve, where elementarity is a torsion condition on the divisor of its poles with a vanishing
holomorphic remainder. The first nonsquare elementary member appears at the ninth force order, and the
power pullback inverts in closed form at every such order, giving a radical orbit for a confining
spring of arbitrarily high degree with a sufficient condition for positive coefficients.
\end{abstract}

\maketitle

\section{Introduction}
\label{sec:intro}

A mass on a nonlinear spring with an odd restoring force obeys
\begin{equation}
    \ddot x+\sum_{k=0}^{2r}a_kx^{2k+1}=0 ,
    \label{eq:spring}
\end{equation}
whose highest force term is $x^{4r+1}$. The orbit under study is a central libration with simple turning points
$\pm A$ and positive kinetic-energy factor between them; that motion is periodic and its period is an
integral~\cite[Sec.~12]{arnold}. For a general choice of the coefficients the integral is
hyperelliptic and the motion has no
elementary expression. For a special choice, the perfect-square case recalled in
Section~\ref{sec:family}, the integral collapses to a sum of inverse tangents, and the time becomes an
elementary function of the displacement. The waveform, meaning the displacement as a function of time,
then needs one more step, the inversion of that elementary relation, and that step is an algebraic
equation whose degree grows with the order of the spring.

When is the quadrature elementary? When is the inversion an
algebraic equation? When are the roots of that equation expressible in radicals?
Under the multiplicity hypothesis that the zeros of the energy polynomial are all simple or all of
even multiplicity, the perfect-square criterion answers the first. Within the distinct-factor
perfect-square family, commensurability of the coefficients answers the second. The third is
a question about a Galois group, and it is the subject of this paper.

Exact periodic solutions of polynomial oscillators are classically obtained through elliptic
functions, as for the cubic and the cubic-quintic Duffing oscillators~\cite{nayfeh,kovacic,elias,belendez},
and the reduction of hyperelliptic integrals to elliptic and elementary ones is tabulated in the
handbooks~\cite{byrd,whittaker}; the present family is the one in which the time integral is
elementary outright. The classical theory of rational functions invertible in radicals begins with Ritt~\cite{rittrad} and
is developed from a modern monodromy viewpoint by Burda~\cite{burda} and in the topological Galois
theory of Khovanskii~\cite{khovanskii}, whose origin is Klein's treatment of the quintic~\cite{klein}. Exact inversion of oscillator
time integrals has also been studied by approximation and quintic reduction~\cite{boschi}. This paper
treats the oscillator-derived rational family: its complete branch data, the reciprocal
decomposition forced by its pole geometry, and the resulting bound on the force order.

The answer is a recursive criterion. A perfect-square member with distinct real factor parameters is called
\emph{commensurate} when its coefficients satisfy $h_k=n_kh_0$ for a nonzero $h_0$ and a primitive
integer vector $(n_1,\ldots,n_r)$. Write $D=\sum_k|n_k|$ for its primitive inversion degree, and call
the member reciprocal when its amplitudes, defined in Section~\ref{sec:family}, come in pairs with
product one carrying equal weights. Radical solvability means solvability of the inversion polynomial
over $\mathbb C(\sigma)$, where $\sigma=\tan(u/h_0)$.

\begin{theorem}
\label{thm:main}
A commensurate distinct-factor member of the perfect-square family has an inversion expressible in
radicals over $\mathbb C(\sigma)$ if and only if one of the following holds.
\begin{enumerate}
\item[\textup{(a)}] The primitive inversion degree satisfies $D\le4$.
\item[\textup{(b)}] The member is reciprocal and its reduced member has primitive inversion degree at
most four, the reduced member being the spring with half as many factor parameters obtained from
the substitution of Section~\ref{sec:reciprocal}, whose inversion equation has half the degree.
\end{enumerate}
\end{theorem}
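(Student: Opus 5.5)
The proof reduces to Galois theory of a single rational map. The perfect-square quadrature writes the time as $u=\sum_k h_k\arctan(\cdot)$. Under commensurability $h_k=n_kh_0$, each inverse tangent is the argument of a linear factor $(1+i\lambda_k y)$ in a suitable coordinate $y$, with $\lambda_k$ the amplitudes. Exponentiating then gives
\[
e^{2iu/h_0}=\prod_k\Bigl(\frac{1+i\lambda_k y}{1-i\lambda_k y}\Bigr)^{n_k}=:R(y),
\]
and $\sigma=\tan(u/h_0)$ is a Möbius function of $R$. The inversion equation is therefore $R(y)=w$ with $w\in\mathbb C(\sigma)$. Its degree in $y$ is $D=\sum|n_k|$ once the factors with $n_k<0$ are cleared, and primitivity of $(n_k)$ ensures no common power has been split off. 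The Galois group of this equation over $\mathbb C(w)$ is the monodromy group $G$ of $R:\mathbb P^1\to\mathbb P^1$. Solvability in radicals over $\mathbb C(\sigma)$ is equivalent to solvability of $G$.

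\textbf{Sufficiency.} Case (a) is immediate, since $G\le S_D$ with $D\le4$ is solvable. For (b), in the reciprocal case the amplitudes pair as $\lambda,1/\lambda$ with equal weights. Then $R$ is invariant under an involution ($y\mapsto -1/y$ or its analogue), so $R=\tilde R\circ q$ with $q$ of degree two, for instance $q=y-1/y$. Here $\tilde R$ is the inversion map of the reduced member, of degree $D/2$. $G$ embeds in a wreath product $C_2\wr\tilde G$ or similar, and this is solvable when $\tilde G$ is. Solving the reduced equation by (a) and then a quadratic yields radicals.

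\textbf{Necessity.} Here I would compute the branch data of $R$ explicitly:
\begin{itemize}
\item the fibres over $w=0,\infty$ are the points $y=\pm i/\lambda_k$, with ramification indices $|n_k|$;
\item the remaining critical points are the zeros of $R'/R=\sum_k 2in_k\lambda_k/(1+\lambda_k^2y^2)$, a numerator of degree about $2r-2$ in $y$;
\item the fibre over $w=R(\infty)$, the phase-angle singularity, needs separate treatment.
\end{itemize}
The key lemma, stated in the abstract, is that the branch value there never vanishes. I take this to mean $R(\infty)=\pm1$ does not coincide with the values $0,\infty$, so its fibre is not merged with the pole fibres.

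I then invoke Ritt's theory, reading decompositions $R=R_1\circ R_2$ as blocks of $G$. Any nontrivial decomposition must respect the fibres over $0$ and $\infty$. Because those fibres consist of the conjugate pairs $\pm i/\lambda_k$ and the singular value is nonzero, a degree-two inner factor must identify $\lambda$ with $1/\lambda$ at equal weight, that is, the member must be reciprocal. I expect to show that higher-degree inner factors are impossible by a Riemann–Hurwitz count, or that they reduce to iterated reciprocal pairings.

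For an indecomposable $R$, $G$ is primitive. A primitive solvable group has degree $p^m$ and is affine, so I would compare the cycle types over $0$ and $\infty$ (cycles of lengths $n_k$ and $|n_k|$, mixed in sign) with the classification of solvable primitive monodromy (Ritt and Burda: cyclic, dihedral/Chebyshev and Lattès-type). Such maps have branch data incompatible with ours once $D\ge5$, leaving $G\supseteq A_D$ and hence non-solvable. This step is the main obstacle: ruling out affine primitive groups of prime-power degree $D\ge5$ using the explicit ramification. I would first try a Riemann–Hurwitz genus-zero count, combined with the fact that the two fibres over $0,\infty$ each carry $r$ cycles, to exclude power maps and Chebyshev and Lattès forms.

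Finally, recursion gives the stated dichotomy. A decomposable member is reciprocal with a reduced member, and applying the same analysis to the reduced member (whose decomposition would force further reciprocity) shows solvability holds only when the reduced degree is at most four.
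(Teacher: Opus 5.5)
Your sufficiency argument is fine and is essentially the paper's. The gap is in necessity, and it starts with the branch data.

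\textbf{The branch data and the key lemma.} The numerator of $R'/R$ is not an undetermined polynomial of degree about $2r-2$. Since $R=e^{2iu/h_0}$, one has $R'/R=(2i/h_0)\,du/dy$, and the quadrature gives $du/dy=(1+y^2)^{r-1}/\prod_k(1+\lambda_k^2y^2)$. So the only critical points outside the fibres over $0$ and $\infty$ are $y=\pm i$, each of multiplicity $r-1$. The point $y=\infty$ is not critical at all, since $du$ is regular and nonzero at $t=1/y=0$. The fibre over $R(\infty)=\pm1$ therefore needs no treatment, and the remark that $\pm1\ne0,\infty$ has no content.

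The phase-angle singularity is $y=\pm i$, where $\tan\theta=\pm i$. The lemma you cite is the paper's statement that $F(j)$ is finite and nonzero. In your normalization it says $R(i)\ne R(-i)=1/R(i)$, equivalently $\mathrm{Im}\,u(i)\ne0$. Only with it are $R(i)$ and $R(-i)$ separate branch values, each carrying a single $r$-cycle and $D-r$ fixed points. This is not a formality. When hardening and softening factors coexist, the path along the imaginary axis crosses poles of $du$ and the integrand changes sign. The paper proves it by writing $\mathrm{Im}\,u(i)$ as a divided difference, at the nodes $\lambda_k^2$, of $G(s)=(s-1)^{r-1}\phi(\sqrt s)/\sqrt s$ with $\phi(\kappa)=\tanh^{-1}\min(\kappa,1/\kappa)$. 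It then shows $G^{(r-1)}>0$ on both sides of the logarithmic singularity at $s=1$. Your plan has no substitute for this.

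\textbf{Decompositions.} Without these facts neither half of necessity closes, and a Riemann--Hurwitz count does not fill the gap. For $R=R_1\circ R_2$ the paper argues as follows.
If $R_1$ were unramified over $R(\pm i)$, it would be branched only over $0,\infty$. It would then be a power map whose degree divides every $|n_k|$, against primitivity.
So $R_1$ has a ramified point over $R(\pm i)$, and its whole $R_2$-fibre consists of critical points of $R$ there, hence lies in $\{i,-i\}$.
If $R_2(i)=R_2(-i)$, then $R(i)=R(-i)$, which the key lemma forbids. A short index count then makes $R_2$ totally ramified at both $\pm i$.
In $v=(y-i)/(y+i)$ it is therefore a power map. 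Its deck rotations $v\mapsto\epsilon v$ preserve $du$, so they permute its real nonzero poles $(1\mp\lambda_k)/(1\pm\lambda_k)$. This forces $\epsilon=\pm1$, inner degree two, and deck map $y\mapsto-1/y$.
This one chain excludes both higher-degree inner factors and degree-two factors with any other involution; your sketch assumes both. You also need that the reduced amplitudes $2/\rho_i$ are all below one. Then the reduced member is never reciprocal, and no iterated pairing can occur.

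\textbf{The indecomposable case.} Matching only the fibres over $0,\infty$ against Ritt's list is not enough, and one configuration survives every count. Take all $|n_k|=1$, so $D=r$: the values $0,\infty$ are regular and $R$ is totally ramified at $\pm i$. That is exactly the branch data of $z^D$, consistent with Riemann--Hurwitz and with cyclic monodromy when $D$ is prime. The paper removes it geometrically. The numerator $\prod_k(1+i\,\mathrm{sgn}(n_k)\lambda_ky)$ of $R$ is then a combination of $(y-i)^r$ and $(y+i)^r$. Its roots therefore satisfy $v^r=c$ and lie on a circle about $0$. But they are images of imaginary points and hence real, which forces $r\le2$.

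Once you have the full branch data, the key lemma and this lemma, your route through Ritt's classification would be a workable alternative to the paper's case analysis on the $D-r$ fixed points and its affine exclusion. Four distinct branch values rule out $T_p$ and the three-point Latt\`es types, and the cycle lengths rule out the $(2,2,2,2)$ type once $D\ge5$. As written, however, those inputs are missing.
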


The statement is proved for every number of distinct nonzero factor parameters. The one
configuration the degree count leaves open, a family whose members carry two fewer amplitudes than
a power of two, is excluded in Section~\ref{sec:general} by a short argument on the cycle structure
of affine maps.

Because a reduced member has softening factor parameters only, it is never itself reciprocal, so the
recursion in (b) stops after one step. Counting degrees then gives the list of orders.

\begin{corollary}
\label{cor:orders}
Within the commensurate distinct-factor perfect-square family, radical orbits exist at the force
orders $x^5$, $x^9$, $x^{13}$ and $x^{17}$, and at no other order of the form $4r+1$. At $x^{17}$
they exist only for reciprocal members. This statement does not classify elementary
mixed-multiplicity energy polynomials. Among springs whose energy-polynomial zeros are all simple or
all of even multiplicity, orders of the form $4r+3$ have no elementary quadrature.
\end{corollary}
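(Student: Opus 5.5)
We derive Corollary~\ref{cor:orders} from Theorem~\ref{thm:main} by translating the degree conditions (a) and (b) into force orders. The last claim, on the orders $4r+3$, is handled separately by a parity argument on the energy polynomial.

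\emph{Step 1: degree versus order.} A force of order $x^{4r+1}$ gives an energy polynomial of degree $4r+2$ in $x$, hence degree $2r+1$ in $x^2$. In the perfect-square family this is a constant times a square of a polynomial of degree $r$ in $x^2$, whose factor parameters give the $r$ distinct nonzero amplitudes. The time integral is then a combination $\sum_k h_k\arctan(\cdot)$ of $r$ inverse tangents. In a commensurate member we have $h_k=n_kh_0$ with every $n_k$ a nonzero integer and the vector primitive, so
\[
D=\sum_k|n_k|\ge r .
\]
Equality holds when all $|n_k|=1$, which we check is admissible with distinct real factors. Condition (a) therefore forces $r\le4$. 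Conversely, for each $r=1,2,3,4$, a member with all $|n_k|=1$ has $D=r\le4$ and so gives a radical orbit. These are the orders $x^5$, $x^9$, $x^{13}$ and $x^{17}$.

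\emph{Step 2: reciprocal members.} The reduced member of a reciprocal member has $r/2$ factor parameters, and its inversion degree is half the original. Condition (b) therefore needs the reduced primitive degree to be at most $4$, which gives $r/2\le4$, i.e.\ $r\le8$. The claim ``at no other order'' forces this bound to be sharper. I would verify from the construction in Section~\ref{sec:reciprocal} that each reciprocal pair with weights $n,n$ becomes a single reduced parameter of weight $n$, and that the reduced primitive degree is $D/2\ge r/2$. For $r\ge5$ this alone does not rule out (b) at $r=6$ or $r=8$. So the plan at this point is to show that a reciprocal pair contributes two to the reduced degree.

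This is the main obstacle: pinning down exactly how the substitution scales degrees. The statement ``half as many factor parameters \ldots\ half the degree'' in Theorem~\ref{thm:main}(b) does not by itself exclude $r=6,8$. I would first test the natural substitution $y=x^2+1/x^2$-type symmetrization on the pair of arctangents $\arctan(\lambda s)+\arctan(s/\lambda)$. Its tangent is rational of degree two in $s$ but degree one in the reduced variable. From that computation I would decide whether the reduced member has degree $D/2$ with $r/2$ parameters. If instead the reduction adds a fixed baseline term (e.g.\ the amplitude pair at $\pm1$), I would check whether that raises the reduced count enough to cap $r$ at $4$.

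Assuming the bound closes at $r\le4$, the reciprocal case matters only at the top order. At $r=4$ with $D=4$, (a) already applies. But I expect the paper's convention to make the all-$|n_k|=1$ configuration fail the distinctness requirement at $r=4$, so that only (b) applies there, giving ``at $x^{17}$ only reciprocal members''. I would check this directly by showing that four distinct real factor parameters with unit weights force $D>4$ after primitivity and sign constraints. Such members would then have to be reciprocal with reduced degree two.

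\emph{Step 3: orders $4r+3$.} Finally, for orders $4r+3$ the energy polynomial has degree $4r+4$ in $x$, i.e.\ odd degree $2r+2$... more precisely, it is even in $x$ with an odd number $2r+1$ of $x^2$-zeros counted appropriately, so it cannot be a perfect square times a constant with the required leading sign. Under the multiplicity hypothesis, the perfect-square criterion recalled in the abstract then says the quadrature is not elementary. The disclaimer about mixed multiplicity needs no proof.
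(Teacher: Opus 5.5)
\textbf{There are genuine gaps:} the existence claims at $x^{13}$ and $x^{17}$ and the exclusion of $r=6,8$ do not go through as written. The third is closed by the key idea your proposal is missing: apply Lemma~\ref{lem:equal} to the reduced member.

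\textbf{Step 1 rests on a false claim.} All-unit weights are \emph{not} admissible once $r\ge3$; this is exactly Lemma~\ref{lem:equal}. If every $|n_k|=1$, the map $F$ is totally ramified at $\eta=\pm j$. Then $X+jY$ is a combination of $(\eta-j)^r$ and $(\eta+j)^r$, so in the Cayley coordinate its roots lie on a circle about the origin. But those roots must be $r$ distinct real points, which forces $r\le2$. Hence $D\ge r+1$ for $r\ge3$, with two consequences.
\begin{enumerate}
\item[(i)] At $x^{13}$ you have no member yet. You need one with three distinct real factors and $D=4$, i.e.\ magnitudes a permutation of $(1,1,2)$. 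The pattern $(1,-2,1)$ is killed by $|n_2|<|n_1|+|n_3|$. Because the merge conditions are two equations in three unknowns, nonemptiness of a surviving pattern must actually be certified; the paper does this for $(1,-1,2)$ with a Krawczyk box.
\item[(ii)] At $x^{17}$ the bound $D\ge5$ means (a) never applies, and that \emph{is} the ``only reciprocal'' assertion. Existence then needs an explicit reciprocal member whose reduced degree is at most four, such as the $(2,-1,-1,2)$ family of Section~\ref{sec:explicit} with its reduced cubic.
\end{enumerate}
Your hunch that something fails at $r=4$ is on target. However, the mechanism is this branch-data argument, not ``distinctness after primitivity and sign constraints'', and it already bites at $r=3$, contradicting your Step~1.

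\textbf{The exclusion of $r=6$ and $r=8$ (orders $x^{25}$, $x^{33}$) is missing, and the proposed repair is wrong.} You note yourself that a pair with weights $(n,n)$ becomes a single reduced weight $n$. So a pair contributes $|n|$, not two, to the reduced degree. The paper's $x^{17}$ member has a pair of weight $-1$ and reduced degree $3$, which refutes ``contributes two''. Writing ``assuming the bound closes at $r\le4$'' assumes the conclusion.

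The missing idea is to apply Lemma~\ref{lem:equal} to the reduced member itself. It is a distinct-factor member with $m=r/2$ amplitudes $2/\rho_i<1$, which are distinct and all softening. For $m\ge4$ its primitive degree is therefore at least $m+1\ge5$. For $m=3$ the lemma gives only $\ge4$, and degree exactly four is excluded by the two inequalities of Section~\ref{sec:reciprocal}:
\begin{enumerate}
\item[(1)] the sign pattern $(+,-,+)$ together with $\sum_kh_k>0$ gives $|n_2|<|n_1|+|n_3|$;
\item[(2)] since all three parameters soften, monotonicity of $|\lam|^{2}/\kap$ on $(-1,0)$ gives $|n_1|<|n_2|$.
\end{enumerate}
No magnitude pattern summing to four satisfies both. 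For odd $r\ge5$, note also that pairing is impossible, since an unpaired amplitude would need $\lam=0$.

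\textbf{Step~3 is correct in substance.} The relevant fact is simply that $P$ has odd degree $2r+1$ and so is not a square; no leading-sign condition enters. The all-simple case is handled by the holomorphic-differential obstruction.
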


The method is classical in outline and explicit in detail. The inversion equation defines a rational
map, and the Galois group of the equation over the field of rational functions of the right-hand side
is the monodromy group of that map. The roots are expressible in radicals exactly when the monodromy
group is solvable, which is the topological form of the Abel--Ruffini theorem developed by
Arnold~\cite{alekseev}. What makes the family tractable is that its branch data can be written
down in closed form: the Wronskian of the inversion equation factors, and Lemma~\ref{lem:wronskian}
below gives the location and multiplicity of every critical point.

The step that makes the classification complete is Lemma~\ref{lem:positive}. The map has four marked
values, two of them at fixed positions and two at $\pm F(j)$, where $j$ is the imaginary unit and $F$
is the inversion map. Everything about the structure of decompositions turns on whether $F(j)$ can
vanish. Lemma~\ref{lem:positive} shows that it never does, for any member of the family. The proof
writes the imaginary part of the relevant integral as a divided difference of the function
$G(s)=(s-1)^{r-1}R(s)$, where $R$ is the Cauchy transform of an explicit positive measure, and then
shows that $G$ has a positive derivative of order $r-1$, by an integral representation below the
singular point of $R$ and by a series with positive coefficients above it. With that in hand, the
possible decompositions collapse to the reciprocal pairing, and the group theory reduces to the
classification of primitive groups containing a cycle~\cite{jones,dixon}.

Section~\ref{sec:family} sets up the family and the inversion equation, and records a
mixed-multiplicity family whose quadratures are elementary without being squares. Section~\ref{sec:branch}
derives the branch data and Section~\ref{sec:positive} proves the positivity lemma.
Section~\ref{sec:structure} proves the two structural lemmas, on equal weights and on decompositions.
Section~\ref{sec:reciprocal} constructs the reduced member of a reciprocal spring.
Section~\ref{sec:groups} carries out the group-theoretic step and proves Theorem~\ref{thm:main}.
Section~\ref{sec:explicit} gives the closed forms at the four solvable orders.
Section~\ref{sec:general} rules out the last configuration, and
Section~\ref{sec:remarks} collects the consequences, including the fact that no substitution of a
power of the displacement can improve matters. Section~\ref{sec:beyond} places the mixed-multiplicity
members: the quadrature becomes a differential of the third kind on a hyperelliptic curve, the
Liouville criterion becomes a torsion condition on the divisor of its poles, and within the orders
congruent to one modulo four the ninth is the lowest at which a nonsquare elementary member occurs.

\section{The family and the inversion equation}
\label{sec:family}

\emph{The energy integral.} Let $V(x)=\sum_ka_kx^{2k+2}/(2k+2)$ be the potential of \eqref{eq:spring}
and let $A$ be the turning point of the orbit under study. Multiplying \eqref{eq:spring} by $\dot x$
and integrating gives $\tfrac12\dot x^2+V(x)=V(A)$. Normalize by $x=Ay$ and $u=\sqrt\alpha\,t$ with
$\alpha=2V(A)/A^2$. Then
\begin{equation}
    \Bigl(\frac{dy}{du}\Bigr)^{2}=G_0(y^2),\qquad
    G_0(w)=\frac{2\bigl[V(A)-V(A\sqrt w)\bigr]}{A^2\alpha},
    \label{eq:energyG}
\end{equation}
where $G_0$ is a polynomial of degree $2r+1$ with $G_0(0)=1$ and $G_0(1)=0$. Factoring out the turning
point,
\begin{equation}
    \Bigl(\frac{dy}{du}\Bigr)^{2}=(1-y^2)\,P(y^2),\qquad P(0)=1,\quad\deg P=2r .
    \label{eq:energy}
\end{equation}
Throughout, $P(w)>0$ for $0\le w\le1$, which is the stated central libration. If ``restoring spring'' is
also required to mean positive linear stiffness at the origin, then differentiating
\eqref{eq:energy} gives the additional condition $a_0/\alpha=1-P'(0)>0$. With $w=y^2$ the
quadrature of \eqref{eq:energy} reads
\begin{equation}
    du=\frac{dw}{2\sqrt{w(1-w)P(w)}} ,
    \label{eq:clock}
\end{equation}
and the question is when this differential has an elementary integral.

\emph{The perfect-square criterion.} Separating variables in \eqref{eq:energy} and substituting
$y=\sin\theta$ gives
\begin{equation}
    u=\int_0^\theta\frac{d\vartheta}{\sqrt{P(\sin^2\vartheta)}} .
    \label{eq:quadtheta}
\end{equation}
The integrand is rational in $\sin\vartheta$ exactly when $P$ is a perfect square. The classification
below treats the squares that split into distinct real factors,
\begin{equation}
    P(w)=\prod_{k=1}^r(1+\lam_kw)^2,\qquad
    \lam_1>\lam_2>\dots>\lam_r>-1,\quad \lam_k\ne0 ,
    \label{eq:perfectsquare}
\end{equation}
with distinct nonzero factor parameters, so that $P$ stays positive on the swing. A square carrying a
pair of nonreal conjugate factors lies outside that family. The zeros of $P$
are $-1/\lam_k$, not the parameters $\lam_k$. If the zeros of $P$ are
all simple, then $(1-y^2)P(y^2)$ has $4r+2$ simple zeros, its square root defines a smooth
hyperelliptic curve of genus $2r$, and the quadrature differential $du$ of \eqref{eq:energy} is a
nonzero holomorphic differential on that curve. A nonzero holomorphic differential has no elementary
primitive by Liouville's theory~\cite{ritt48,rosenlicht,bronstein}. If every root has even multiplicity
then $P$ is a square. A mixed multiplicity puts poles on that curve, so the differential is no longer
holomorphic and the argument does not apply. A factor parameter below $-1$ would put a double zero of $P$ inside the swing,
which is an equilibrium of \eqref{eq:spring}, and the orbit through the origin would approach it with
infinite period instead of returning. Since an odd-degree polynomial is never a square, orders of the
form $x^{4r+3}$ have no elementary member with roots all simple or all of even multiplicity, which is
the last statement of Corollary~\ref{cor:orders}.

\emph{Mixed multiplicities.} Members with mixed multiplicities can still be elementary. The energy
polynomial
\begin{equation}
    P(w)=\Bigl(1+\tfrac{w}{6}\Bigr)^{2}\Bigl(1+\tfrac{w}{27}\Bigr)^{2}
          \Bigl(1+\tfrac{w}{15}\Bigr)\Bigl(1+\tfrac{w}{24}\Bigr)
    \label{eq:mixed}
\end{equation}
carries multiplicities two, two, one and one, stays positive on the swing, and gives a restoring
force of degree thirteen with every coefficient positive. It is not a square, and its quadrature is
elementary all the same. Set $T(w)=(w+6)^2(w+27)$, for which $T(w)-972=w(w+15)(w+24)$ and
$1372-T(w)=(1-w)(w+20)^2$. Those two identities give
\begin{equation}
    \frac{dw}{2(w+6)(w+27)\sqrt{w(1-w)(w+15)(w+24)}}
    =\frac{dT(w)}{6\,T(w)\sqrt{\bigl(T(w)-972\bigr)\bigl(1372-T(w)\bigr)}} ,
    \label{eq:mixedpull}
\end{equation}
and \eqref{eq:quadtheta} is a constant multiple of the integral of either side. A quadratic sits
under the root on the right, so that curve is a conic and the cubic map $T$ pulls the differential
back from genus zero. The angle--time relation is the single inverse tangent
\begin{equation}
    u=2\sqrt{\tfrac{135}{686}}\,\tan^{-1}\!\left[\sqrt{\tfrac{343}{243}}\,
      \frac{y\sqrt{(y^{2}+15)(y^{2}+24)}}{(y^{2}+20)\sqrt{1-y^{2}}}\right] ,
    \label{eq:mixedclock}
\end{equation}
and the orbit has period $4\pi\sqrt{135/686}$. The example is not isolated: with
$T(w)=(w+p)^2(w+q)$ the two identities hold for every $p$ when $q=4p+3$, taking the form
\begin{equation}
    T(w)-p^2q=w\bigl(w^2+(6p+3)w+9p^2+6p\bigr),\qquad
    4(p+1)^3-T(w)=(1-w)(w+3p+2)^2 .
    \label{eq:mixedfam}
\end{equation}
The energy polynomial of the corresponding member is
\begin{equation}
    P(w)=\frac{(w+p)^2(w+q)^2\bigl(w^2+(6p+3)w+9p^2+6p\bigr)}{p^2q^2\bigl(9p^2+6p\bigr)} ,
\end{equation}
normalized to $P(0)=1$. For rational $p$ the quadratic factor in \eqref{eq:mixedfam} splits over the
rational field exactly when $12p+9$ is a square there. Every coefficient of the restoring force stays
positive for $p$ above the largest real root of $4p^3-7p^2-14p-5$, which is the only positive root
of that cubic. Once poles are present a
positive genus no longer obstructs an elementary quadrature, and the perfect-square condition stays
sufficient at every order while its converse needs the multiplicity hypothesis.

\emph{The angle flow and the quadrature.} With \eqref{eq:perfectsquare} the quadrature is
\begin{equation}
    u=\int_0^\theta\frac{d\vartheta}{Q(\vartheta)},\qquad
    Q(\vartheta)=\prod_{k=1}^r\bigl(1+\lam_k\sin^2\vartheta\bigr),
    \label{eq:quad}
\end{equation}
and the waveform is $x(t)=A\sin\theta(\sqrt\alpha\,t)$. With $\eta=\tan\theta$,
$1+\lam_k\sin^2\vartheta=(1+\kap_k^2\eta^2)/(1+\eta^2)$, with
\begin{equation}
    \kap_k=\sqrt{1+\lam_k}>0 ,
    \label{eq:amplitudes}
\end{equation}
the amplitudes of the member, and the quadrature becomes rational,
\begin{equation}
    du=\frac{(1+\eta^2)^{r-1}\,d\eta}{\prod_{k=1}^r\bigl(1+\kap_k^2\eta^2\bigr)} .
    \label{eq:dueta}
\end{equation}
A factor with $\lam_k>0$ hardens the spring and has $\kap_k>1$; a factor with $\lam_k<0$ softens it
and has $\kap_k<1$. Decomposing \eqref{eq:dueta} into partial fractions in $\eta^2$ and integrating term by
term gives
\begin{equation}
    u=\sum_{k=1}^rh_k\tan^{-1}\bigl(\kap_k\eta\bigr),\qquad
    h_k=\frac{\lam_k^{\,r-1}}{\kap_k\prod_{i=1,\,i\ne k}^{r}(\lam_k-\lam_i)} ,
    \label{eq:arctan}
\end{equation}
where the coefficient follows by evaluating the numerator and the remaining factors of the denominator
at the pole $\eta^2=-1/\kap_k^2$ and canceling the common powers of $1+\lam_k$. Letting
$\eta\to\infty$ gives the quarter period as the finite sum $U=\tfrac{\pi}{2}\sum_kh_k$.

\emph{The inversion equation.} Suppose the coefficients are pairwise commensurate,
\begin{equation}
    h_k=n_kh_0,\qquad n_k\in\mathbb{Z},\quad\gcd(n_k)=1 ,
    \label{eq:merge}
\end{equation}
which imposes $r-1$ equations on the $r$ factor parameters. When the constraint Jacobian has full
rank and the solution set is nonempty, a fixed primitive integer pattern therefore gives a locally
one-dimensional family. With $\psi=u/h_0$ and $\sigma=\tan\psi$, and since $\tan^{-1}z=\arg(1+jz)$
for real $z$ and arguments add under multiplication,
\begin{equation}
    X(\eta)+jY(\eta)=\prod_{k=1}^r\bigl(1+j\,\mathrm{sgn}(n_k)\,\kap_k\eta\bigr)^{|n_k|} ,
    \label{eq:XY}
\end{equation}
with $X$ and $Y$ real polynomials, so that $\tan\psi=Y/X$ and the inversion is
\begin{equation}
    Y(\eta)-\sigma X(\eta)=0,\qquad D=\sum_{k=1}^r|n_k| .
    \label{eq:inversion}
\end{equation}
Here and below the gcd means $\gcd(|n_1|,\ldots,|n_r|)$, so the vector is primitive and $D$ is
independent of a common rescaling of $h_0$ and the integers. The base field for radical solvability is
$\mathbb C(\sigma)$. In particular, for $r=1$ the primitive normalization gives $|n_1|=D=1$.
The displacement follows from $\sin\theta=\eta/\sqrt{1+\eta^2}$. Only one root of
\eqref{eq:inversion} is physical; the others are values of the multivalued tangent and are discarded by
continuity from $\eta=0$ at $\psi=0$. Since every $|n_k|\ge1$, the degree satisfies $D\ge r$.

The polynomials $X$ and $Y$ are coprime. A common zero would be a zero of both $X+jY$ and $X-jY$,
which by \eqref{eq:XY} vanish at the points $j/[\mathrm{sgn}(n_k)\kap_k]$ and
$-j/[\mathrm{sgn}(n_\ell)\kap_\ell]$, so such a zero would require
$\mathrm{sgn}(n_\ell)\kap_\ell=-\mathrm{sgn}(n_k)\kap_k$ for some $k$ and $\ell$. The amplitudes
\eqref{eq:amplitudes} are positive and distinct, which leaves no such pair. The leading coefficient of
$X+jY$ in \eqref{eq:XY} is a product of nonzero terms, so the larger of the two degrees is $D$. Write
\begin{equation}
    F=\frac{Y}{X}
    \label{eq:Fdef}
\end{equation}
for the rational map of degree $D$ defined by \eqref{eq:inversion}. It is real, because $X$ and $Y$
are real, and odd, because $\eta\to-\eta$ conjugates the product \eqref{eq:XY}.

\section{Branch data of the inversion map}
\label{sec:branch}

The Galois group of \eqref{eq:inversion} over the field of rational functions of $\sigma$ is the
monodromy group of $F$, the group of permutations of the $D$ roots obtained by carrying $\sigma$ once
around each branch value. The roots are expressible in radicals if and only if that group is
solvable~\cite{alekseev}. Everything therefore rests on the branch data, which the following
lemma supplies in closed form.

\begin{lemma}
\label{lem:wronskian}
For a member, the Wronskian of the inversion equation is
\begin{equation}
    W=XY'-YX'=\frac{1}{h_0}\bigl(1+\eta^2\bigr)^{r-1}\prod_{k=1}^{r}\bigl(1+\kap_k^2\eta^2\bigr)^{|n_k|-1} .
    \label{eq:wronskian}
\end{equation}
Consequently the critical points of $F$ are exactly $\eta=\pm j$, each of multiplicity $r-1$, and
$\eta=\pm j/\kap_k$, of multiplicity $|n_k|-1$; the four points $\sigma=\pm j$ and
$\sigma=\pm F(j)$, called the marked values below, carry every branch value of the map, a marked
value being a branch value exactly when its local permutation is nontrivial; above $F(j)$ and
above $F(-j)$ the local permutation is an $r$-cycle together with $D-r$ fixed points; and above
$\sigma=\pm j$ the cycle lengths are the $|n_k|$.
\end{lemma}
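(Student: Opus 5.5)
The plan is to compute the Wronskian by logarithmic differentiation of $Z=X+jY$ and then read the branch data off the critical points. Write $\bar Z=X-jY$, the conjugate polynomial. Then $Z\bar Z=X^2+Y^2$ and
\[
\bar Z Z'-Z\bar Z'=2j\,(XY'-YX').
\]
Hence
\[
W=\frac{X^2+Y^2}{2j}\left(\frac{Z'}{Z}-\frac{\bar Z'}{\bar Z}\right).
\]
From the product \eqref{eq:XY}, with $s_k=\mathrm{sgn}(n_k)$,
\[
\frac{Z'}{Z}-\frac{\bar Z'}{\bar Z}=\sum_k |n_k|\left(\frac{js_k\kap_k}{1+js_k\kap_k\eta}+\frac{js_k\kap_k}{1-js_k\kap_k\eta}\right)=2j\sum_k\frac{n_k\kap_k}{1+\kap_k^2\eta^2}.
\]
Since $X^2+Y^2=\prod_k(1+\kap_k^2\eta^2)^{|n_k|}$, this gives
\[
W=\prod_k(1+\kap_k^2\eta^2)^{|n_k|-1}\cdot\sum_k n_k\kap_k\prod_{i\ne k}(1+\kap_i^2\eta^2).
\]
The remaining sum equals $\prod_i(1+\kap_i^2\eta^2)\sum_k n_k\kap_k/(1+\kap_k^2\eta^2)$. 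Because $h_k=n_kh_0$, the identity $\sum_k h_k\kap_k/(1+\kap_k^2\eta^2)=du/d\eta$ from \eqref{eq:arctan} and \eqref{eq:dueta} turns it into $h_0^{-1}(1+\eta^2)^{r-1}$. That is \eqref{eq:wronskian}.

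Next come the critical points. Since $X,Y$ are coprime, the finite critical points of $F=Y/X$, with multiplicity, are the zeros of $W$. Points where $X=0$ need a word: there $F$ has a pole, and the local degree is governed by the order of vanishing of $W=-X^2(Y/X)'$ rewritten through $(X/Y)'$, which is the same Wronskian up to sign. Degree counting gives $\deg W=2(r-1)+2\sum(|n_k|-1)=2D-2$, which is the full Riemann--Hurwitz total $2D-2$. So $\eta=\infty$ is not critical, and the list $\pm j$ (multiplicity $r-1$) and $\pm j/\kap_k$ (multiplicity $|n_k|-1$) is complete. Distinctness of the $\kap_k$ and $\kap_k\ne1$, which follows from $\lam_k\ne0$, keeps these points separate.

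Then the fibres. At $\eta=j/(s_k\kap_k)$ the factor $\bar Z$ vanishes to order $|n_k|$. Hence $X-jY=0$, i.e.\ $F=-j$ in the appropriate orientation, and likewise $\eta=-j/(s_k\kap_k)$ maps to $\sigma=+j$. The fibre over $\sigma=\pm j$ is exactly the zero set of $X\mp jY$ (or its conjugate), which is $\{\pm j/(s_k\kap_k)\}$ with multiplicities $|n_k|$. These multiplicities sum to $D$, so the cycle lengths there are the $|n_k|$. Over $F(j)$ the point $\eta=j$ has ramification index $r$. The remaining $D-r$ preimages are unramified, because the only other critical points lie over $\pm j$, and $F(j)\ne\pm j$ since $\eta=j$ is not a zero of $X\mp jY$. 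Oddness of $F$ gives $F(-j)=-F(j)$ with the same structure. Every critical value is therefore one of the four marked values, and a marked value is a branch value exactly when its local permutation is nontrivial.

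The main obstacle I expect is the bookkeeping of which of $\pm j/\kap_k$ maps to $+j$ versus $-j$ given the signs $s_k$. The second delicate point is the pole and infinity case in the critical-point count, which I handle by the Riemann--Hurwitz degree check rather than local analysis. A further subtlety is the possibility $F(j)=\pm F(-j)$ coinciding with $\pm j$ or $F(j)=0$. The first is excluded above. Coincidence $F(j)=F(-j)$ would mean $F(j)=0$; the lemma does not need to exclude it, since it only merges two marked values, and Lemma~\ref{lem:positive} treats it later.
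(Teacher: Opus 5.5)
Your Wronskian computation is correct and differs only mildly from the paper's. The paper obtains $W$ from $F'=W/X^2$ and $d\sigma/d\eta=(1+\sigma^2)h_0^{-1}\,du/d\eta$, that is, through the identity $Y/X=\tan(u/h_0)$. Your identity $\bar ZZ'-Z\bar Z'=2jW$, combined with $du/d\eta=\sum_k h_k\kap_k/(1+\kap_k^2\eta^2)$, is purely algebraic. Both finish with $X^2+Y^2=\prod_k(1+\kap_k^2\eta^2)^{|n_k|}$, and your degree count is the paper's.

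The genuine gap is in your last paragraph. If $F(-j)=F(j)$, the fibre over $F(j)$ contains both critical points $\eta=j$ and $\eta=-j$, each of index $r$. For $r\ge2$ the local permutation there is then two $r$-cycles with $D-2r$ fixed points. That does not merely merge two labels: it contradicts the cycle type the lemma asserts. Your sentence that the other $D-r$ preimages are unramified because ``the only other critical points lie over $\pm j$'' overlooks that $-j$ is itself a critical point, lying over $F(-j)$. Moreover, by oddness the coincidence occurs exactly when $F(j)\in\{0,\infty\}$, not only when $F(j)=0$, since $-\infty=\infty$ on the sphere. By Lemma~\ref{lem:Fj} the second case is $\prod_k\beta_k^{n_k}=-1$, which nothing in your argument excludes.

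So the proof needs $F(j)\ne F(-j)$ as an input, and Lemma~\ref{lem:positive} provides it: $\mathrm{Im}\,u(j)>0$ makes $F(j)=\tan(u(j)/h_0)$ finite and nonzero. Its proof uses Lemma~\ref{lem:Fj} and the integral for $\mathrm{Im}\,u(j)$ but not the present lemma, so citing it is not circular. The paper's own proof relies on the same fact tacitly when it says that $\eta=j$ is the only critical point above $F(j)$.

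Separately, the orientation you worried about is indeed reversed. It is $Z=X+jY$ that vanishes at $\eta=j/(s_k\kap_k)$, because $1+js_k\kap_k\cdot j/(s_k\kap_k)=0$, and $Z=0$ gives $F=Y/X=j$. So these points lie over $\sigma=+j$ and their negatives over $\sigma=-j$. Since both fibres carry the multiplicities $|n_k|$, this slip does not affect the conclusion.
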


\begin{proof}
From $F=Y/X$ the quotient rule gives $F'=W/X^2$. On the other hand $\sigma=\tan\psi$ gives
$d\sigma/d\eta=(1+\sigma^2)\,d\psi/d\eta$ and $d\psi/d\eta=h_0^{-1}du/d\eta$, so by \eqref{eq:dueta}
\begin{equation}
    W=X^2\bigl(1+\sigma^2\bigr)\frac{1}{h_0}\frac{du}{d\eta}
     =\bigl(X^2+Y^2\bigr)\frac{1}{h_0}\frac{(1+\eta^2)^{r-1}}{\prod_{k=1}^{r}(1+\kap_k^2\eta^2)} .
    \label{eq:wronskian0}
\end{equation}
Multiplying \eqref{eq:XY} by its conjugate gives
$X^2+Y^2=\prod_{k=1}^{r}(1+\kap_k^2\eta^2)^{|n_k|}$, and \eqref{eq:wronskian} follows. Its degree is
$2(r-1)+2\sum_k(|n_k|-1)=2D-2$, which is the largest a Wronskian of degree $D$ can have, so $F$ has no
critical points beyond the zeros of \eqref{eq:wronskian}, and those are as stated. The points
$\pm j/\kap_k$ lie above $\sigma=\pm j$, since \eqref{eq:inversion} reduces there to
$\prod_{k=1}^{r}(1\pm j\,\mathrm{sgn}(n_k)\kap_k\eta)^{|n_k|}=0$. The remaining critical points $\pm j$ lie
above $F(\pm j)$, and $F$ is odd, so $F(-j)=-F(j)$. Since the only critical point above $F(j)$ is
$\eta=j$, with multiplicity $r-1$, the local permutation there is an $r$-cycle and the other $D-r$
sheets are unramified.
\end{proof}

Figure~\ref{fig:branch}(a) shows the critical points, and a formula for $F(j)$ comes next.

\begin{lemma}
\label{lem:Fj}
Let $\beta_k=(1-\kap_k)/(1+\kap_k)$, so that $-1<\beta_k<1$ and $\beta_k\ne0$. Then
\begin{equation}
    F(j)=-j\,\frac{P_-\,-\,P_+}{P_-\,+\,P_+},\qquad
    P_\mp=\prod_{k=1}^{r}\bigl(1\mp\mathrm{sgn}(n_k)\kap_k\bigr)^{|n_k|} ,
    \label{eq:Fj}
\end{equation}
so that $F(j)$ is purely imaginary, $F(j)=\pm j$ is impossible, $F(j)=\infty$ corresponds to the
product below equal to minus one, and
\begin{equation}
    F(j)=0\iff\prod_{k=1}^{r}\beta_k^{\,n_k}=1 .
    \label{eq:Fjzero}
\end{equation}
\end{lemma}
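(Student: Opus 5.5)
The plan is to evaluate the product \eqref{eq:XY} directly at $\eta=j$. There each factor becomes $1+j\,\mathrm{sgn}(n_k)\kap_k j=1-\mathrm{sgn}(n_k)\kap_k$, a real number, so $X(j)+jY(j)=P_-$. To obtain $X(j)-jY(j)$, I will use that $X$ and $Y$ are real polynomials. Conjugating \eqref{eq:XY} for real $\eta$ gives
\begin{equation*}
X(\eta)-jY(\eta)=\prod_{k=1}^r\bigl(1-j\,\mathrm{sgn}(n_k)\kap_k\eta\bigr)^{|n_k|} .
\end{equation*}
This is a polynomial identity, so it holds at $\eta=j$ as well, where it gives $X(j)-jY(j)=P_+$.

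Solving the two relations yields $X(j)=(P_-+P_+)/2$ and $jY(j)=(P_--P_+)/2$. Hence
\begin{equation*}
F(j)=\frac{Y(j)}{X(j)}=\frac{1}{j}\,\frac{P_--P_+}{P_-+P_+}=-j\,\frac{P_--P_+}{P_-+P_+} ,
\end{equation*}
which is \eqref{eq:Fj}. Since $P_\mp$ are real, $F(j)$ is purely imaginary, or $\infty$. The evaluation needs $X(j)$ and $Y(j)$ not both zero. This holds because $|P_-|,|P_+|$ are products of the factors $|1\mp\kap_k|$, and these never vanish: the amplitudes satisfy $\kap_k\neq1$ because $\lam_k\neq0$.

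For the remaining claims I will rewrite everything in terms of $\beta_k$. For each $k$,
\begin{equation*}
\frac{1-\mathrm{sgn}(n_k)\kap_k}{1+\mathrm{sgn}(n_k)\kap_k}=\beta_k^{\mathrm{sgn}(n_k)} ,
\end{equation*}
because flipping the sign of $\kap_k$ inverts the ratio. Raising to the power $|n_k|$ gives $P_-/P_+=\prod_k\beta_k^{\,n_k}=:B$, and then $F(j)=-j(B-1)/(B+1)$. From this form the cases read off directly.
\begin{itemize}
\item $F(j)=0$ exactly when $B=1$, which is \eqref{eq:Fjzero}.
\item $F(j)=\infty$ exactly when $B=-1$, the product equal to minus one.
\item $F(j)=\pm j$ would force $(B-1)/(B+1)=\mp1$. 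That means $B-1=\mp(B+1)$, that is, $B=0$ or the contradiction $-1=1$. But $B\neq0$ since every $\beta_k\neq0$, so this case is impossible.
\end{itemize}

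The bounds $-1<\beta_k<1$ and $\beta_k\neq0$ follow from $\kap_k>0$ and $\kap_k\neq1$. There is no real obstacle in this argument. The only points needing care are justifying the conjugate identity at the nonreal point $\eta=j$, via the polynomial identity, and keeping the sign conventions of $\mathrm{sgn}(n_k)$ straight so that the ratio comes out as $\beta_k^{n_k}$ and not $\beta_k^{-n_k}$.
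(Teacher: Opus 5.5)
Your proof is correct and follows the paper's argument. You evaluate \eqref{eq:XY} and its conjugate at $\eta=j$ to get $X(j)\pm jY(j)=P_\mp$, then identify $P_-/P_+$ with $\prod_k\beta_k^{\,n_k}$ because a negative weight inverts its factor. The one difference is how you exclude $F(j)=\pm j$: the paper uses $F(j)=\tan\bigl(u(j)/h_0\bigr)$ and the fact that $\tan z=\pm j$ has no solution, whereas you read it off the M\"obius form $-j(B-1)/(B+1)$ together with $B\neq0$, which is self-contained and avoids extending the tangent relation to the complex point $\eta=j$.
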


\begin{proof}
Evaluating \eqref{eq:XY} and its conjugate at $\eta=j$ gives $X(j)+jY(j)=P_-$ and $X(j)-jY(j)=P_+$,
hence $X(j)=(P_-+P_+)/2$ and $Y(j)=-j(P_--P_+)/2$, which is \eqref{eq:Fj}. The ratio $P_-/P_+$ is
$\prod_{k=1}^{r}\beta_k^{n_k}$, because a negative weight inverts the corresponding factor, and $F(j)$ vanishes
exactly when that ratio is one. Finally $F(j)=\pm j$ would mean $\tan z=\pm j$ for
$z=u(j)/h_0$, and $\tan z=\pm j$ has no solution, since it gives $\sin z=\pm j\cos z$ and hence
$\sin^2z+\cos^2z=0$.
\end{proof}

The positivity lemma of the next section says that the alternative in
\eqref{eq:Fjzero} never occurs.

\begin{figure*}[t]
\centering
\begin{subfigure}{0.48\textwidth}\centering\includegraphics[width=\linewidth]{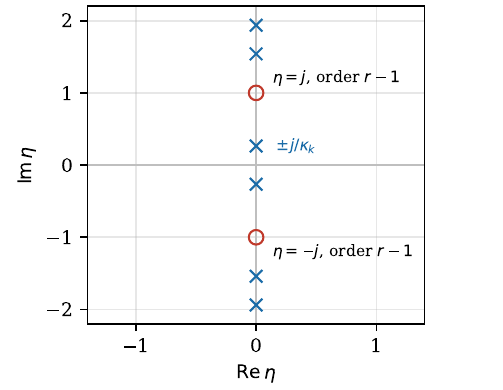}
\caption{}\label{fig:branch_a}\end{subfigure}\hfill
\begin{subfigure}{0.48\textwidth}\centering\includegraphics[width=\linewidth]{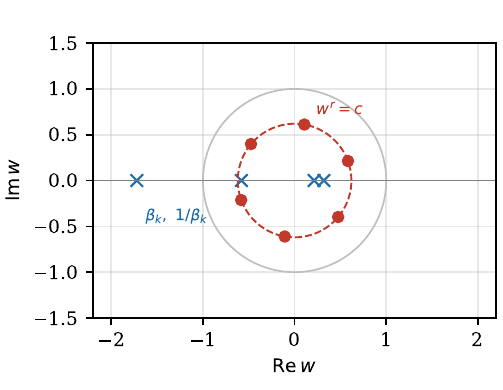}
\caption{}\label{fig:branch_b}\end{subfigure}
\caption{Branch data of the inversion map. (a) Critical points in
the plane of $\eta$: the points $\pm j$ of multiplicity $r-1$, and the points $\pm j/\kap_k$ on the
same axis. (b) The same points in the coordinate $w=(\eta-j)/(\eta+j)$, with the two largest
poles beyond the plotted range. The imaginary axis becomes the
real axis, so the poles of $du$ sit at the real points $\beta_k$ and $1/\beta_k$, while the roots of a
binomial $w^r=c$, drawn here for illustration with six roots, sit at equal angles on a circle, of
which at most two points are real.}
\label{fig:branch}
\end{figure*}

\section{The branch value never vanishes}
\label{sec:positive}

Along the imaginary axis put $\eta=j\tau$ with $0\le\tau<1$. Then \eqref{eq:dueta} reads
\begin{equation}
    du=j\,\frac{(1-\tau^2)^{r-1}\,d\tau}{\prod_{k=1}^{r}\bigl(1-\kap_k^2\tau^2\bigr)} ,
    \label{eq:duimag}
\end{equation}
and the path meets a pole at $\tau=1/\kap_k$ for every hardening factor. Each pole contributes a real
amount to $u(j)$ and the imaginary part is the principal value of \eqref{eq:duimag}. Equivalently,
from \eqref{eq:arctan} with $\tan^{-1}(jz)=j\tanh^{-1}z$ for $|z|<1$ and
$\tan^{-1}(jz)=\pm\pi/2+j\tanh^{-1}(1/z)$ for $z>1$,
\begin{equation}
\begin{split}
    \mathrm{Im}\,u(j)&=\sum_kh_k\,\phi(\kap_k),\\
    \phi(\kap)&=\tanh^{-1}\bigl(\min(\kap,1/\kap)\bigr)>0 .
\end{split}
    \label{eq:Imu}
\end{equation}

\begin{lemma}
\label{lem:positive}
For every perfect-square spring with distinct nonzero factor parameters larger than $-1$,
\begin{equation}
    \mathrm{Im}\,u(j)>0 .
    \label{eq:positive}
\end{equation}
Consequently $F(j)\ne0$ for every member, and the inversion map has four distinct marked values.
\end{lemma}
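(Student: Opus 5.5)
The plan is to recognize \eqref{eq:Imu} as a divided difference, as announced in the introduction. Put $s=1+\lam=\kap^2$. Then $\lam_k-\lam_i=s_k-s_i$, and by \eqref{eq:arctan}
\begin{equation*}
    h_k\,\phi(\kap_k)=\frac{(s_k-1)^{r-1}\,R(s_k)}{\prod_{i\ne k}(s_k-s_i)},\qquad
    R(s)=\frac{\phi(\sqrt s)}{\sqrt s}.
\end{equation*}
Summing over $k$, $\mathrm{Im}\,u(j)$ is exactly the divided difference $G[s_1,\ldots,s_r]$ of $G(s)=(s-1)^{r-1}R(s)$ at the distinct points $s_k\in(0,\infty)\setminus\{1\}$. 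For $r=1$ this is just $h_1\phi(\kap_1)=\phi(\kap_1)/\kap_1>0$.

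For $r\ge2$ I would use the Peano (B-spline) form of the divided difference,
\begin{equation*}
    G[s_1,\ldots,s_r]=\frac{1}{(r-1)!}\int M(t)\,G^{(r-1)}(t)\,dt,
\end{equation*}
with $M\ge0$ the B-spline on $[\min s_k,\max s_k]$, $M\not\equiv0$. It therefore suffices to show $G^{(r-1)}>0$ on $(0,1)\cup(1,\infty)$. I also need $G^{(r-2)}$ absolutely continuous across $s=1$, which is where $R$ is singular. Since $R$ has only a logarithmic singularity at $s=1$, the factor $(s-1)^{r-1}$ makes $G^{(r-1)}$ behave like $(r-1)!\log|s-1|$ plus a bounded term. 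That is integrable, and the formula holds.

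The positivity comes from a Cauchy-transform representation of $R$. For $0<s<1$,
\begin{equation*}
    R(s)=\frac{\tanh^{-1}\sqrt s}{\sqrt s}=\int_0^1\frac{dt}{1-st^2},
\end{equation*}
and for $s>1$,
\begin{equation*}
    R(s)=\frac{\tanh^{-1}(1/\sqrt s)}{\sqrt s}=\int_0^1\frac{dt}{s-t^2}.
\end{equation*}

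Below the singularity, write $a=t^{-2}>1$, so the integrand of $G$ is $a(s-1)^{r-1}/(a-s)$. Expand $(s-1)^{r-1}=\bigl((s-a)+(a-1)\bigr)^{r-1}$. Every term except the last is a polynomial of degree at most $r-2$ and is killed by $r-1$ derivatives. What remains is
\begin{equation*}
    \frac{d^{r-1}}{ds^{r-1}}\,\frac{a(a-1)^{r-1}}{a-s}=\frac{(r-1)!\,a(a-1)^{r-1}}{(a-s)^{r}}>0 .
\end{equation*}
Above the singularity, expand $(s-1)^{r-1}=\bigl((s-t^2)-(1-t^2)\bigr)^{r-1}$. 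The surviving term gives
\begin{equation*}
    (-1)^{r-1}(1-t^2)^{r-1}\frac{d^{r-1}}{ds^{r-1}}\frac{1}{s-t^2}=\frac{(r-1)!\,(1-t^2)^{r-1}}{(s-t^2)^{r}}>0 ,
\end{equation*}
because the two sign factors cancel. This is the series-with-positive-coefficients counterpart mentioned in the introduction. Differentiating under the integral is justified on compact subsets away from $s=1$. Integrating over $t$ gives $G^{(r-1)}>0$ off $s=1$, and hence \eqref{eq:positive}.

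The consequence follows by contradiction. Suppose $F(j)=\tan\bigl(u(j)/h_0\bigr)$ vanished. Then $u(j)/h_0\in\pi\mathbb Z$ would be real. Since $h_0$ is real and nonzero, this would force $\mathrm{Im}\,u(j)=0$, which is impossible. So $F(j)\ne0$. Together with $F(j)\ne\pm j$ from Lemma~\ref{lem:Fj}, $F(j)\ne-F(j)$ as $F(j)\ne0$ (including the case $F(j)=\infty$), the four marked values $\pm j,\pm F(j)$ are distinct.

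The main obstacle I expect is the careful treatment of the singular point $s=1$. One must check that the Peano representation survives a logarithmic singularity of $G^{(r-1)}$ inside the interval. One must also confirm the branch of $\tan^{-1}$ so that the real contributions from the poles do not affect the imaginary part in \eqref{eq:Imu}.
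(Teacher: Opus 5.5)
Your positivity argument is sound, but the last deduction contains a false step. You say $F(j)\ne-F(j)$ follows from $F(j)\ne0$, ``including the case $F(j)=\infty$.'' On the Riemann sphere $-\infty=\infty$. So $F(j)=\infty$ would make the marked values $F(j)$ and $F(-j)=-F(j)$ coincide, leaving only three. This case is not vacuous a priori: Lemma~\ref{lem:Fj} identifies it with $\prod_k\beta_k^{n_k}=-1$, and nothing before this lemma excludes that.

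The fix is the same one-line argument you used for the zero. $\tan z=\infty$ forces $\cos z=0$, so $z\in\pi/2+\pi\mathbb Z\subset\mathbb R$. Hence $\mathrm{Im}\,u(j)>0$ makes $F(j)$ finite as well as nonzero, which is how the paper concludes.

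A smaller slip concerns the behaviour at $s=1$. On both sides of that point one has $R(s)=\ln(1+\sqrt s)/\sqrt s-\ln|s-1|/(2\sqrt s)$. Hence $G^{(r-1)}\sim-\tfrac12(r-1)!\log|s-1|\to+\infty$, not $(r-1)!\log|s-1|$; your version would contradict the positivity you just proved. Integrability is unaffected. This same two-sided formula is what makes $G^{(r-2)}$ continuous at $s=1$, and you should say so. Integrability of $G^{(r-1)}$ on each side alone does not rule out a jump, and you need absolute continuity for the Peano formula.

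Apart from this, your route differs from the paper's in two useful ways.

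\emph{Above the singular point.} The paper expands $R(s)=\sum_m s^{-m-1}/(2m+1)$ and evaluates an alternating binomial sum by a Beta-function identity to reach \eqref{eq:derabove}. You instead use the nonsingular representation $R(s)=\int_0^1dt/(s-t^2)$ with the same polynomial-division trick as below. This gives $G^{(r-1)}(s)=(r-1)!\int_0^1(1-t^2)^{r-1}(s-t^2)^{-r}\,dt$. Its expansion in $1/s$ is exactly the paper's series, with coefficients the Beta integrals $\int_0^1t^{2m}(1-t^2)^{r-1}\,dt$. Your version is shorter and treats both sides of $s=1$ uniformly.

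\emph{From the derivative to the divided difference.} The paper mollifies $G$, applies Hermite--Genocchi, and passes to the limit by dominated convergence. You apply the Peano (B-spline) representation directly. The two are equivalent, since the B-spline is the push-forward of the simplex measure in Hermite--Genocchi. Yours trades the limiting argument for the absolute-continuity check noted above.
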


\begin{proof}
Write $s_k=\kap_k^2=1+\lam_k$, so that the nodes $s_k$ are distinct and positive, and set
\begin{equation}
    R(s)=\frac{\phi(\sqrt s)}{\sqrt s},\qquad G(s)=(s-1)^{r-1}R(s) .
    \label{eq:GA}
\end{equation}
Since $\lam_k=s_k-1$, the products $\prod_{i=1,\,i\ne k}^{r}(\lam_k-\lam_i)$ and
$\prod_{i=1,\,i\ne k}^{r}(s_k-s_i)$ agree, and the coefficient in \eqref{eq:arctan} gives
$h_k\phi(\kap_k)=G(s_k)/\prod_{i=1,\,i\ne k}^{r}(s_k-s_i)$, so that \eqref{eq:Imu} is
the divided difference~\cite{deboor} of order $r-1$,
\begin{equation}
    \mathrm{Im}\,u(j)=G[s_1,\dots,s_r] .
    \label{eq:divdiff}
\end{equation}

\emph{An integral representation.} For $0<s<1$ one has
$R(s)=\int_0^1dt/(1-st^2)$, and for $s>1$ the same integral taken as a principal value equals
$\tanh^{-1}(1/\sqrt s)/\sqrt s$. Substituting $x=1/t^2$ turns it into a Cauchy transform,
\begin{equation}
    R(s)=\int_1^{\infty}\frac{d\mu(x)}{x-s},\qquad d\mu(x)=\tfrac12x^{-1/2}dx ,
    \label{eq:markov}
\end{equation}
again as a principal value when $s>1$. The measure is positive and supported on $[1,\infty)$.

\emph{Positivity of the derivative below the singular point.} For fixed $x\ge1$, dividing the
polynomial $(s-1)^{r-1}$ by $s-x$ gives $(s-1)^{r-1}=q(s)(s-x)+(x-1)^{r-1}$ with $\deg q=r-2$, so
\begin{equation}
    \frac{(s-1)^{r-1}}{x-s}=-q(s)+\frac{(x-1)^{r-1}}{x-s} ,
    \label{eq:division}
\end{equation}
whose derivative of order $r-1$ is $(r-1)!\,(x-1)^{r-1}(x-s)^{-r}$, because the polynomial part has
degree $r-2$. Differentiating \eqref{eq:markov} under the integral sign,
\begin{equation}
\begin{split}
    G^{(r-1)}(s)&=(r-1)!\int_1^{\infty}\frac{(x-1)^{r-1}}{(x-s)^{r}}\,d\mu(x)\\
    &>0,\qquad 0<s<1 ,
\end{split}
    \label{eq:derbelow}
\end{equation}
since every factor in the integrand is positive there.

\emph{Positivity of the derivative above the singular point.} For $s>1$ expand
$R(s)=\sum_{m\ge0}s^{-m-1}/(2m+1)$, which converges, and set $y=1/s$, so that
$G(s)=(1-y)^{r-1}y^{2-r}\sum_{m\ge0}a_my^m$ with $a_m=1/(2m+1)$. Using
$d/ds=-y^2d/dy$ and $(-y^2d/dy)^{r-1}y^p=(-1)^{r-1}p(p+1)\cdots(p+r-2)\,y^{p+r-1}$ term by term, the
coefficient of $y^{n+1}$ in $G^{(r-1)}$ is
\begin{equation}
    (-1)^{r-1}(n+2-r)_{r-1}\sum_{i=0}^{\min(n,r-1)}(-1)^i\binom{r-1}{i}a_{n-i} ,
    \label{eq:coeff}
\end{equation}
where $(z)_p=z(z+1)\cdots(z+p-1)$ and where the index $i$ has dropped out of the Pochhammer factor
because $p=i+m+2-r$ with $m=n-i$. The factor $(n+2-r)_{r-1}$ vanishes for $n\le r-2$, so the series
starts at $n=r-1$. The alternating sum is evaluated by the Beta-function identity
$\sum_i(-1)^i\binom{N}{i}/(z+i)=N!/[z(z+1)\cdots(z+N)]$, applied after writing
$a_{n-i}=1/(2n-2i+1)$ and reversing the order of summation, which gives
\begin{equation}
\begin{split}
    \sum_{i}(-1)^i&\binom{r-1}{i}a_{n-i}\\
    &=\frac{(-1)^{r-1}\,2^{r-1}(r-1)!}{(2n-2r+3)(2n-2r+5)\cdots(2n+1)} .
\end{split}
    \label{eq:beta}
\end{equation}
The two signs cancel, and with $m=n-r+1\ge0$,
\begin{equation}
\begin{split}
    G^{(r-1)}(s)&=\sum_{m\ge0}2^{r-1}(r-1)!\\
    &\times\frac{(m+1)(m+2)\cdots(m+r-1)}{(2m+1)(2m+3)\cdots(2m+2r-1)}\,s^{-m-r}\\
    &>0,\qquad s>1 .
\end{split}
    \label{eq:derabove}
\end{equation}

\emph{From the derivative to the divided difference.} At $s=1$ the function $R$ has a logarithmic
singularity, so $G$ behaves like $(s-1)^{r-1}\log|s-1|$ there. Its derivative of order $r-1$ therefore
has a logarithmic singularity as well, which is locally integrable, and the derivative is positive on
both sides by \eqref{eq:derbelow} and \eqref{eq:derabove}. The nodes themselves avoid the singular
point, because $s_k=1+\lam_k$ and no factor parameter is zero, but
the simplex they span contains it whenever the member has both a hardening and a softening factor.
For $r=1$ the divided difference is simply $G(s_1)>0$, so assume $r\ge2$. More precisely, regard
$T=G^{(r-1)}$ as a distribution. The local form
$G(s)=(s-1)^{r-1}O(\log|s-1|)$ shows that $G,G',\ldots,G^{(r-2)}$ have matching limiting values at
$s=1$; hence the distributional derivative contains no Dirac mass there. It is exactly the locally
integrable function computed in \eqref{eq:derbelow} and \eqref{eq:derabove}, and it is positive almost
everywhere. Mollify $G$ with a nonnegative kernel supported in a small interval. Differentiation
commutes with convolution, so the derivative of order $r-1$ of the mollified function is
$T*\varphi_\epsilon>0$ on the node interval. The Hermite--Genocchi formula writes its divided
difference as the integral of this derivative over the simplex, which is therefore positive. Near the
singular point $T$ is bounded by a constant multiple of
$\bigl|\log|s-1|\bigr|$, which is integrable on the simplex, so dominated convergence carries the
integral to the limit as the kernel shrinks. The divided difference converges with it, being a fixed
finite combination of values at the nodes, where the mollified functions converge uniformly, so
$G[s_1,\dots,s_r]>0$. Strictness follows because $T$ is bounded away from zero on a set of positive
measure.

Finally, $\mathrm{Im}\,u(j)>0$ makes $u(j)/h_0$ a point off the real axis, so
$F(j)=\tan\bigl(u(j)/h_0\bigr)$ is finite and nonzero, the tangent being finite off the real axis,
and by Lemma~\ref{lem:Fj} it is purely imaginary and different from $\pm j$. The four marked
values $\pm j$ and $\pm F(j)$ are therefore distinct. For $r=1$ the primitive degree is one and
no marked value is a branch value, and for $r=2$ with both weights of magnitude one the points
$\pm j$ in the target are regular, so an identity permutation over a marked value is admitted
throughout what follows.
\end{proof}

\begin{remark}
When every factor softens, every $\kap_k<1$, the integrand in \eqref{eq:duimag} has no pole and the
positivity is immediate. The content of Lemma~\ref{lem:positive} is the case of mixed factors, where the
path of integration crosses poles and the integrand changes sign.
\end{remark}

\section{Structural lemmas}
\label{sec:structure}

Throughout this section $F$ is the inversion map of a member. The Cayley coordinate
\begin{equation}
    w=\frac{\eta-j}{\eta+j}
    \label{eq:cayley}
\end{equation}
sends $j$ to $0$, sends $-j$ to $\infty$, and sends the imaginary axis of $\eta$ to the real axis of
$w$. Under it, \eqref{eq:dueta} becomes
\begin{equation}
    du=c\,\frac{w^{r-1}\,dw}{\prod_{k=1}^{r}(w-\beta_k)(w-1/\beta_k)},\qquad
    \beta_k=\frac{1-\kap_k}{1+\kap_k} ,
    \label{eq:duw}
\end{equation}
with a constant $c$, so the poles of $du$ are the real points $\beta_k$ and $1/\beta_k$, and the map
$\eta\to-\eta$ becomes $w\to1/w$.

Figure~\ref{fig:branch}(b) shows those poles and the roots of a binomial in this coordinate.

\begin{lemma}
\label{lem:equal}
If $r\ge3$ then not all $|n_k|$ can equal one, so $D\ge r+1$.
\end{lemma}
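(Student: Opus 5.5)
The plan is to argue by contradiction: assume $r\ge3$ and $|n_k|=1$ for every $k$, so that $D=r$, and use Lemma~\ref{lem:wronskian} to pin down $F$ completely up to M\"obius changes of coordinate.

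\emph{Step 1: only two critical points.} With every $|n_k|=1$, the factors $(1+\kap_k^2\eta^2)^{|n_k|-1}$ in \eqref{eq:wronskian} disappear. The Wronskian reduces to $h_0^{-1}(1+\eta^2)^{r-1}$, of degree $2r-2=2D-2$. Thus $F$, of degree $r$, has exactly two critical points, $\eta=\pm j$, each of multiplicity $r-1$. Each is therefore totally ramified, and the $r$-cycle above $F(\pm j)$ has no fixed points left over.

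\emph{Step 2: normal form.} A rational map of degree $r$ with exactly two critical points, both totally ramified, is a power map up to M\"obius changes of coordinate on both sides. To see this, take a M\"obius $M$ sending $F(j)\mapsto0$ and $F(-j)\mapsto\infty$. Then $M\circ F$ has a single zero of order $r$ at $\eta=j$ and a single pole of order $r$ at $\eta=-j$. Hence $M\circ F=c\,w^r$, where $w$ is the Cayley coordinate \eqref{eq:cayley} and $c\ne0$. Here Lemma~\ref{lem:positive} guarantees $F(j)\ne F(-j)$, so $M$ exists. In the $w$ coordinate, the fibre of $F$ over any non-critical value $\sigma_0$ is therefore the set of roots of a binomial $w^r=c'$, with $c'=M(\sigma_0)/c\neq0,\infty$.

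\emph{Step 3: reality of the fibre over $\sigma=j$.} By Lemma~\ref{lem:Fj}, $\sigma=j$ is not $F(\pm j)$, so it is a regular value. Its fibre consists of the $r$ points $\eta=j/(\mathrm{sgn}(n_k)\kap_k)$, read off from \eqref{eq:inversion} as in the proof of Lemma~\ref{lem:wronskian}. In the Cayley coordinate each of these becomes $w=\beta_k$ when $n_k>0$ and $w=1/\beta_k$ when $n_k<0$; in both cases it is a real number. So all $r$ roots of $w^r=c'$ would be real. But these roots are $r$ distinct points on the circle $|w|=|c'|^{1/r}$, spaced at angle $2\pi/r$, and at most two of them are real. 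For $r\ge3$ this is a contradiction, so some $|n_k|\ge2$. Since every $|n_k|\ge1$, it follows that $D=\sum_k|n_k|\ge r+1$.

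The main obstacle is Step 2, the normal-form claim. It needs both critical values to be distinct and both critical points totally ramified. Distinctness is supplied by Lemma~\ref{lem:positive}, through $F(-j)=-F(j)\neq F(j)$. Total ramification comes from the degree count in Step 1. The remaining computation, converting the fibre over $\sigma=j$ to the $\beta$'s, is routine.
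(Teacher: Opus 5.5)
Your proof is correct and is essentially the paper's argument: total ramification at $\eta=\pm j$ makes $F$ a power $w^r$ of the Cayley coordinate up to a M\"obius map of the target, and the fibre over $\sigma=j$ (the points $\beta_k$ or $1/\beta_k$) would then be $r$ real points equally spaced on a circle about the origin. The only difference is that you invoke Lemma~\ref{lem:positive} for $F(j)\ne F(-j)$, whereas the paper gets this from a degree count: the fibre over $F(j)$ has total multiplicity $D=r$, all of it at $\eta=j$, so $-j$ cannot lie in it and the positivity lemma is not needed here.
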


\begin{proof}
If every $|n_k|=1$ then $D=r$ and, by Lemma~\ref{lem:wronskian}, the permutations above $\sigma=\pm j$
are trivial, so $F$ is totally ramified at $\eta=\pm j$. A combination of $X$ and $Y$ therefore
vanishes to order $r$ at $\eta=j$ and another at $\eta=-j$, and the two combinations are
independent since otherwise $X$ or $Y$ would vanish to order $2r>D$, so $X+jY$ is a linear
combination of $(\eta-j)^r$ and $(\eta+j)^r$. In the coordinate \eqref{eq:cayley} its roots satisfy $w^r=c$ for a
constant $c$ and lie at equal angles on a circle about the origin. On the other hand those roots are
the points $\eta=j/(\mathrm{sgn}(n_k)\kap_k)$, which are purely imaginary and therefore map to real
values of $w$. A circle about the origin contains at most two real points, so $r\le2$.
\end{proof}

\begin{lemma}
\label{lem:decomp}
If $F=G\circ H$ with both factors of degree at least two, then $\deg H=2$, the deck transformation of
$H$ is $\eta\to-1/\eta$, and the member is reciprocal: its amplitudes come in pairs with product one,
carrying equal weights.
\end{lemma}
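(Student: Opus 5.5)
The plan is to compare the branch data of $F$ in Lemma~\ref{lem:wronskian} with the ramification forced by a decomposition $F=G\circ H$, with $m=\deg H\ge2$ and $\deg G=D/m\ge2$. Two facts do most of the work.

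\textbf{Fibres.} Every fibre of $F$ is a union of fibres of $H$. If $G$ is ramified at a point $z$, then every point of $H^{-1}(z)$ is a critical point of $F$.

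\textbf{Branch values.} The branch values of $G$ lie among the marked values $\pm j,\pm F(j)$. Lemma~\ref{lem:positive} makes these four values distinct, so the fibres above $\pm j$ and above $\pm F(j)$ never merge.

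\textbf{Step 1: $G$ must ramify over $F(j)$ or $F(-j)$.} Suppose not. Then $G$ has at most two branch values, namely $\pm j$. A map of the sphere with two branch values is Möbius-equivalent to $z\mapsto z^{\deg G}$, so it is totally ramified over both. Every point of $F^{-1}(j)$ then has ramification index divisible by $\deg G$. By Lemma~\ref{lem:wronskian} those indices are the $|n_k|$, and $\gcd(|n_k|)=1$ forces $\deg G=1$, a contradiction.

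\textbf{Step 2: $H$ is totally ramified at $j$.} Say $G$ ramifies at some $z$ with $G(z)=F(j)$. The only critical point of $F$ in $F^{-1}(F(j))$ is $\eta=j$, so by the fibre fact $H^{-1}(z)=\{j\}$. Thus $H$ is totally ramified at $j$, with index $m$ dividing the cycle length $r$.

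\textbf{Step 3: $H$ is totally ramified at $-j$ as well.} This is the step I expect to be the main obstacle. If $G$ also ramifies over $F(-j)$, the same argument applies at $-j$. Otherwise I would combine two ingredients.
\begin{itemize}
\item \emph{Riemann--Hurwitz for $H$.} $H$ has $2m-2$ critical points counted with multiplicity, all among $\pm j$ and $\pm j/\kap_k$. Total ramification at $j$ uses $m-1$ of them. The remaining $m-1$ must sit at $-j$ or at points $\pm j/\kap_k$ whose $H$-fibres lie inside $F^{-1}(\pm j)$.
\item \emph{Oddness of $F$.} Since $F$ is odd, $\tilde G(z)=-G(z)$ and $\tilde H(\eta)=H(-\eta)$ give a second decomposition $F=\tilde G\circ\tilde H$, with $\tilde H$ totally ramified at $-j$.
\end{itemize}
I would then show that $H$ and $\tilde H$ induce the same block system on the $F$-fibres, so that they agree up to a Möbius map on the left. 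The tool is a Riemann--Hurwitz count on $G$, whose branch values are then confined to $\{\pm j,F(j)\}$. If this comparison is clean, $H$ is totally ramified at both $\pm j$.

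\textbf{Step 4: $m=2$ and the deck transformation.} Once $H$ is totally ramified at $\pm j$, in the Cayley coordinate \eqref{eq:cayley} we have $H=M\circ w^m$ for a Möbius map $M$. The deck group is generated by the rotations $w\to\zeta w$ with $\zeta^m=1$.

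The fibre $F^{-1}(j)$ is a union of $H$-fibres, each of the form $\{\zeta^i w_0\}$. That fibre consists of the points $\eta=j/(\mathrm{sgn}(n_k)\kap_k)$, which map to the real values $\beta_k$ or $1/\beta_k$ of $w$. A rotation orbit contained in the real line has at most two points, so $m=2$. The deck transformation is then $w\to-w$, which in $\eta$ is $\eta\to-1/\eta$.

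\textbf{Step 5: the member is reciprocal.} Invariance of the fibre under $w\to-w$ sends $\beta_k$ to $-\beta_k$. This equals $\beta_\ell$ (or $1/\beta_\ell$) exactly when $\kap_\ell=1/\kap_k$. So the amplitudes pair off with product one.

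Because $F\circ\iota=F$ for the deck involution $\iota$, the ramification index at a point equals the index at its image. Hence paired amplitudes carry equal $|n_k|$. Comparing which of $\pm j$ each point lies over fixes the signs, so the weights are equal.
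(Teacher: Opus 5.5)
Your Steps 1, 2, 4 and 5 are sound and follow the paper closely. The one difference is in Step 4: you use the rotation-invariance of the real fibre $F^{-1}(j)$, where the paper uses the real pole set of $du$, and either works. The gap is Step 3, which you leave as a conditional sketch. The comparison of $H$ with $\tilde H(\eta)=H(-\eta)$ is never carried out, and nothing you cite forces $H$ and $\tilde H$ to induce the same block system. As written, total ramification of $H$ at $-j$ is therefore not established. Step 4 cannot run without it, since a map totally ramified only at $w=0$ need not be a power of $w$ up to a M\"obius map.

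The missing idea is a local index count, and it shows that your ``otherwise'' case never occurs. In Step 2 you have
$r=e_F(j)=e_G(z)\,e_H(j)=e_G(z)\,m$ with $e_G(z)\ge2$, so $m\le r/2<r$. By Lemma~\ref{lem:wronskian}, $-j$ is also a critical point of multiplicity $r-1$, so $e_F(-j)=r$. On the other hand,
$e_F(-j)=e_G\bigl(H(-j)\bigr)\,e_H(-j)$ with $e_H(-j)\le m<r$. Hence $e_G\bigl(H(-j)\bigr)\ge2$, so $G$ is ramified at the point $H(-j)$, which lies over $F(-j)$.

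Your Step 2 argument now applies verbatim at $-j$. The only critical point of $F$ over $F(-j)$ is $-j$, and $H(j)\ne H(-j)$ because $F(j)\ne F(-j)$ by Lemma~\ref{lem:positive}. Together these give $H^{-1}\bigl(H(-j)\bigr)=\{-j\}$. Replace your Riemann--Hurwitz and oddness detour with this, and the rest of your argument goes through as in the paper.
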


\begin{proof}
Every branch value of $G$ is a branch value of $F$. If $G$ were unramified above $\pm F(j)$ its only
branch values would be $\sigma=\pm j$, so $G$ would be branched over two points at most. The
sphere with two points removed has infinite cyclic fundamental group, so a connected covering of it
is the cyclic one, and a rational map branched over at most two points is $z\mapsto z^d$ after
M\"obius changes on source and target~\cite{volklein}. Such a $G$ is totally ramified above
$\sigma=j$, and its degree would divide every
$|n_k|$, against $\gcd(n_k)=1$. So $G$ is ramified at some point $y$ above $F(j)$ or above $F(-j)$.
Every point of $H^{-1}(y)$ is then a critical point of $F$ above the same value, and by
Lemma~\ref{lem:wronskian} the only such points are $j$ and $-j$, so
$H^{-1}(y)\subseteq\{j,-j\}$.

If $H(j)=H(-j)$ then $F(j)=F(-j)=-F(j)$, so $F(j)=0$, which Lemma~\ref{lem:positive}
forbids. So $H(j)\ne H(-j)$, and $H^{-1}(y)$ is a single point, say $\{j\}$, so $H$ is totally
ramified at $j$ and $b\,\nu_G(y)=r$ with $b=\deg H$ and $\nu_G(y)\ge2$, whence $b\le r/2$. The image
$H(-j)$ must then be ramified for $G$ as well, since otherwise the index of $F$ at $-j$ would equal
$\nu_H(-j)=r>b$, which is impossible. Applying the same argument at $-j$ gives
$H^{-1}(H(-j))=\{-j\}$, so $H$ is totally ramified at both points. In the coordinate
\eqref{eq:cayley} the map $H$ is then a power of $w$ up to a M\"obius change on the target, and its
deck rotations $w\to\epsilon w$ with $\epsilon^b=1$ leave $F$ unchanged, hence leave $du$ unchanged, hence
permute the poles of \eqref{eq:duw}. Those poles are real and nonzero, and multiplication by $\epsilon$
maps a finite set of nonzero reals to itself only when $\epsilon=\pm1$. Hence $b=2$ and the deck map is
$w\to-w$, which is $\eta\to-1/\eta$. Under it \eqref{eq:dueta} is unchanged, and comparing the poles
and residues on the two sides pairs each $\kap_k$ with $1/\kap_k$ and each weight with the weight of
its partner.
\end{proof}

\section{Reciprocal members and the reduction}
\label{sec:reciprocal}

The differential \eqref{eq:dueta} is unchanged by $\eta\to-1/\eta$ exactly when the amplitudes come in
pairs with product one carrying equal weights, since that map sends $\kap$ to $1/\kap$, and an
unpaired amplitude would have to satisfy $\kap^2=1$, that is $\lam=0$, which is not an allowed factor
parameter. The pairing therefore needs an even number of factor parameters, $r=2m$. Let the pair sums be
$\rho_i=\kap_i+1/\kap_i>2$ and put $\xi=\eta-1/\eta$. Each pair contributes
\begin{equation}
    \bigl(1+\kap^2\eta^2\bigr)\bigl(1+\eta^2/\kap^2\bigr)=\eta^2\bigl(\xi^2+\rho^2\bigr),
    \label{eq:pairfactor}
\end{equation}
because $\xi^2=\eta^2-2+\eta^{-2}$ and $\rho^2=\kap^2+2+\kap^{-2}$, while $1+\eta^2=\eta(\eta+1/\eta)$
and $d\xi=(\eta+1/\eta)d\eta/\eta$. Substituting in \eqref{eq:dueta} and canceling the powers of
$\eta$,
\begin{equation}
    du=\frac{(\xi^2+4)^{m-1}\,d\xi}{\prod_{i=1}^m\bigl(\xi^2+\rho_i^2\bigr)} ,
    \label{eq:duxi}
\end{equation}
since $(\eta+1/\eta)^2=\xi^2+4$. With $\xi=2\zeta$ this becomes
\begin{equation}
    du=\gamma\,\frac{(1+\zeta^2)^{m-1}\,d\zeta}
       {\prod_{i=1}^m\bigl(1+(2/\rho_i)^2\zeta^2\bigr)},\qquad
    \gamma=\frac{2^{2m-1}}{\prod_{i=1}^m\rho_i^2} ,
    \label{eq:duzeta}
\end{equation}
which after the time rescaling $u\to u/\gamma$ is the quadrature \eqref{eq:dueta} again, for a
spring with $m$ factor parameters and amplitudes $2/\rho_i<1$, all softening, and with half the inversion degree.
The constant $\gamma$ leaves the primitive integer weights and the inversion degree unchanged.
This reduced member is the one referred to in Theorem~\ref{thm:main}. In the angle the substitution is
the doubling
\begin{equation}
    \xi=\tan\theta-\cot\theta=-2\cot2\theta .
    \label{eq:doubling}
\end{equation}
Since a reduced member has softening factor parameters only, it is never itself
reciprocal.

Figure~\ref{fig:red}(a) shows the amplitudes of a reciprocal seventeenth-order member joined in pairs
with product one, beside the amplitudes of an unpaired member at the same order, and
Fig.~\ref{fig:red}(b) shows the angle doubling \eqref{eq:doubling}.

\begin{figure*}[t]
\centering
\begin{subfigure}{0.48\textwidth}\centering\includegraphics[width=\linewidth]{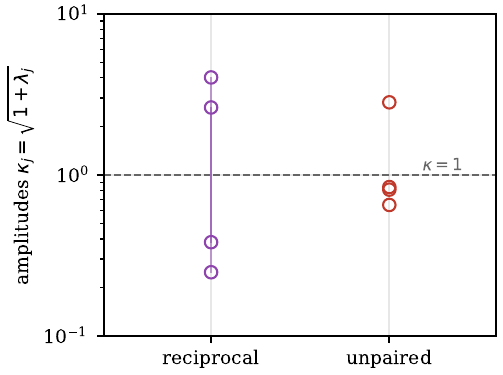}
\caption{}\end{subfigure}\hfill
\begin{subfigure}{0.48\textwidth}\centering\includegraphics[width=\linewidth]{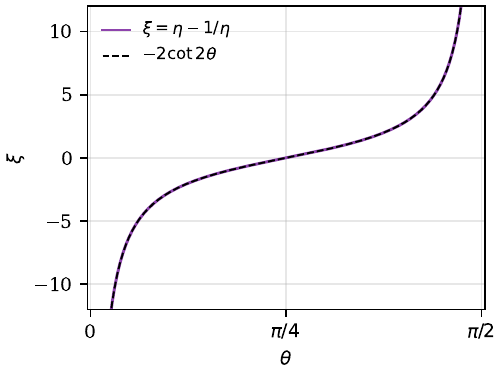}
\caption{}\end{subfigure}
\caption{The reciprocal structure. (a) Amplitudes of a reciprocal seventeenth-order member, joined in
pairs with product one, and of an unpaired member of the same order. (b) The substitution of
Section~\ref{sec:reciprocal} is the doubling of the angle.}
\label{fig:red}
\end{figure*}

The proof of Corollary~\ref{cor:orders} uses two inequalities on the coefficients. Order the factor parameters as in
\eqref{eq:perfectsquare}. Exactly $k-1$ of the factors $\lam_k-\lam_i$ are negative, so their product
has the sign $(-1)^{k-1}$, and $\lam_k^{r-1}$ has the sign of $\lam_k$ when $r$ is even and is positive
when $r$ is odd. The coefficients \eqref{eq:arctan} therefore alternate in sign along the ordered
parameters, except, when $r$ is even, for one repetition at the change of sign of the parameters. For three parameters the pattern is
$(+,-,+)$, and since the coefficients sum to $2U/\pi>0$,
\begin{equation}
    |n_2|<|n_1|+|n_3| .
    \label{eq:h2bound}
\end{equation}
Multiplying each $|h_k|$ by the common positive factor
$|(\lam_1-\lam_2)(\lam_1-\lam_3)(\lam_2-\lam_3)|$ gives, for three parameters,
\begin{equation}
    |h_1|:|h_2|:|h_3|=(\lam_2-\lam_3)\frac{\lam_1^2}{\kap_1}:
    (\lam_1-\lam_3)\frac{\lam_2^2}{\kap_2}:(\lam_1-\lam_2)\frac{\lam_3^2}{\kap_3},
    \label{eq:ratios}
\end{equation}
every factor positive. For softening factor parameters the function $|\lam|^{\,r-1}/\kap$ decreases as $\lam$
increases on $(-1,0)$, since differentiating it gives two non-positive terms. When all three parameters
soften, $\lam_1^2/\kap_1<\lam_2^2/\kap_2$ by that monotonicity, and $\lam_1-\lam_3>\lam_2-\lam_3$,
so $|n_1|<|n_2|$. No pattern with $|n_1|+|n_2|+|n_3|\le4$ satisfies both
\eqref{eq:h2bound} and this inequality, which is the statement used in the proof of
Corollary~\ref{cor:orders}.

\section{The group-theoretic step}
\label{sec:groups}

Decompositions of $F$ correspond to systems of blocks of its monodromy group, so $F$ is indecomposable
exactly when the group is primitive~\cite{ritt22}. By Lemma~\ref{lem:wronskian} the group contains an
$r$-cycle with $f=D-r$ fixed points, the permutation above $F(j)$, and a permutation above $\sigma=j$
whose cycle lengths are the $|n_k|$ and whose number of cycles is therefore exactly $r$.

The monodromy groups that arise from rational maps of the sphere form a restricted
class~\cite{guralnick}. The following facts about primitive groups are used.
A primitive group containing a
transposition is the
full symmetric group, and a primitive group containing a $3$-cycle contains the alternating
group~\cite{dixon}. A primitive group containing a cycle with at least three fixed points contains the
alternating group~\cite{jones}. A solvable primitive group has prime-power degree and is contained in
the affine group of that degree, and the fixed points of an affine map form an affine
subspace~\cite{dixon}.

\begin{lemma}
\label{lem:primitive}
Let $F$ be indecomposable with $D\ge5$ and let its monodromy group be solvable. Then $D$ is a power of
two, $D=r+2$, and the weights consist of two entries of magnitude two and $r-2$ entries of magnitude
one.
\end{lemma}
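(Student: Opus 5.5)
Indecomposability of $F$ makes its monodromy group $M$ primitive, so if $M$ is solvable then $D=p^e$ is a prime power and $M$ sits inside $\mathrm{AGL}(e,p)$ acting on $\mathbb F_p^e$. By Lemma~\ref{lem:wronskian}, $M$ contains an $r$-cycle $c$ with $f=D-r$ fixed points, namely the local permutation above $F(j)$. The plan is to bound $f$ first and then pin down the weights through the Riemann--Hurwitz count.

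\emph{Bounding $f$.} If $f\ge3$, the result of~\cite{jones} puts $A_D$ inside $M$, which is not solvable for $D\ge5$. So $f\le2$.

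\emph{Ruling out $f=1$ and $f=0$.}
\begin{itemize}
\item If $f=1$, then $c$ is an affine map fixing exactly one point. Conjugating that point to the origin makes $c$ linear, and it permutes the $p^e-1$ nonzero vectors in a single cycle. So $c$ is a Singer-type element of order $p^e-1$.
\item If $f=0$, then $c$ is a $D$-cycle in $\mathrm{AGL}(e,p)$.
\end{itemize}
Both cases are excluded by counting branch data. By Lemmas~\ref{lem:wronskian} and~\ref{lem:positive} there are four marked values, and the Riemann--Hurwitz sum over them is $2D-2$. The two values $\pm F(j)$ each contribute $r-1$, and $\pm j$ each contribute $D-r$. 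With $f\le1$ we have $D-r\le1$, so the permutation above $\sigma=j$ has at most one cycle of length two and is otherwise trivial. This needs separate handling:
\begin{itemize}
\item If $D=r$, Lemma~\ref{lem:equal} applies, since $D\ge5$ forces $r\ge3$; this gives a contradiction.
\item If $D=r+1$, the permutation above $j$ is a transposition. A primitive group containing a transposition is $S_D$~\cite{dixon}, which is not solvable.
\end{itemize}

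\emph{The case $f=2$.} Now $f=2$, so $D=r+2$ and the two fixed points of the affine map $c$ form an affine subspace with exactly two points. Hence $p=2$ and $D=2^e$. Above $\sigma=j$ there are exactly $r=D-2$ cycles, with lengths the $|n_k|$ summing to $D$. So either one weight has magnitude three and the rest magnitude one, or two weights have magnitude two and the rest magnitude one. In the first case the permutation above $j$ is a $3$-cycle, and primitivity would give $A_D$ by~\cite{dixon}, again not solvable. This leaves the stated pattern.

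The step needing most care is $f=1$. I have tried to route it entirely through the degree count so as to avoid Singer-cycle group theory. The main obstacle is checking that the count closes every subcase, in particular that $D=r$ is genuinely covered by Lemma~\ref{lem:equal}, which needs $r\ge3$, and that $D\ge5$ is used wherever non-solvability of $A_D$ is invoked.
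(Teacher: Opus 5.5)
Your proof is correct and follows the paper's argument essentially step for step: Jones's theorem for $f\ge3$, Lemma~\ref{lem:equal} for $f=0$, the transposition above $\sigma=j$ for $f=1$, and for $f=2$ the exclusion of the $3$-cycle together with the fact that the two fixed points of an affine map form an affine subspace, which forces characteristic two. The Riemann--Hurwitz bookkeeping and the Singer-cycle aside are unnecessary, since the cycle type above $\sigma=j$ is read off directly from Lemma~\ref{lem:wronskian}, but they do no harm.
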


\begin{proof}
The group is primitive and contains an $r$-cycle with $f=D-r$ fixed points. If $f\ge3$ the group
contains the alternating group of degree $D\ge5$ and is not solvable~\cite{jones}. If $f=0$ then
$D=r$, which Lemma~\ref{lem:equal} excludes for $r\ge3$, while $r\le2$ gives $D\le2$, leaving
$f=1$ and $f=2$.

For $f=1$, $r=D-1$, so the permutation above $\sigma=j$ has $D-1$ cycles on $D$ points, which
forces one cycle of length two and $D-2$ fixed points, a transposition. A primitive group containing a
transposition is the symmetric group of degree $D$, which is not solvable for $D\ge5$.

For $f=2$, $r=D-2$ and the permutation above $\sigma=j$ has $D-2$ cycles, so it is either a
$3$-cycle with $D-3$ fixed points or a product of two transpositions. A $3$-cycle in a primitive group
brings in the alternating group, again excluding solvability for $D\ge5$. In the remaining case the
group is solvable and primitive, hence affine of degree $q^{\ell}=D$ on a space of $\ell$
coordinates over a field with $q$ elements, and the fixed points of the $r$-cycle form an affine
subspace whose size is a power of $q$ and equals two, so $q=2$ and $D=2^{\ell}$. The weights are two
entries of magnitude two and $r-2$ entries of magnitude one, since their sum is $D=r+2$ and their
number is $r$.
\end{proof}

\begin{lemma}
\label{lem:exceptional}
The configuration that Lemma~\ref{lem:primitive} leaves open does not occur, at any degree.
\end{lemma}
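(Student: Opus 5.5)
The plan is to derive a contradiction inside the affine group from the $r$-cycle alone. By Lemma~\ref{lem:primitive} the open configuration has $D=2^{\ell}$ with $D\ge5$, so $\ell\ge3$, and $r=D-2$. The monodromy group is primitive and solvable, hence contained in the affine group of $V=\mathbb F_2^{\ell}$. It contains the permutation above $F(j)$, which by Lemma~\ref{lem:wronskian} is a single cycle of length $D-2$ with exactly two fixed points. I will show that no affine map of $V$ has this cycle type when $\ell\ge3$. I do not expect to need the double transposition above $\sigma=j$.

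First I normalize the map. Write it as $g(x)=Ax+b$. Its fixed points form an affine subspace with two elements, a line $\{p,p+v\}$. Conjugating by the translation $x\mapsto x-p$, which stays in the affine group, I may take $p=0$. Then $g=A$ is linear, and $\ker(A-I)=\langle v\rangle$. The nonzero vectors other than $v$ form one $A$-cycle of length $2^{\ell}-2$.

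Next I pass to the quotient. Since $Av=v$, the translation $\tau(x)=x+v$ commutes with $A$. So $\tau$ permutes the $A$-cycles, and it must preserve the unique long cycle. Hence $A$ induces a map $\bar A$ on $\bar V=V/\langle v\rangle$ that has one cycle of length $m=2^{\ell-1}-1$ on the nonzero cosets. Each coset in the long cycle is met twice, so $A^{m}$ fixes every coset but not every vector; it acts as $x\mapsto x+v$ off $\langle v\rangle$. Thus the order of $A$ is exactly $2m$.

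Finally I use the Jordan decomposition to reach the contradiction. Write $A=su=us$ with $s$ semisimple of odd order $m$ and $u$ unipotent of order dividing two. The key step, and the one I expect to need the most care, is that $s$ acts irreducibly on a complement $W$ of $\ker(s-I)$ with $\dim W=\ell-1$. This holds because $\bar s=\bar A^{\,2}$, up to an odd power, still acts transitively on the $m$ nonzero vectors of $\bar V$ (as $m$ is odd), so $\bar s$ is a Singer cycle of $\mathrm{GL}(\ell-1,2)$. Its eigenvalues on $\bar V$ are therefore the primitive elements of $\mathbb F_{2^{\ell-1}}$. Combining this with $\ker(A-I)=\langle v\rangle$ gives $V=\langle v\rangle\oplus W$ with $s|_W$ irreducible and $s|_{\langle v\rangle}=I$. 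The unipotent $u$ commutes with $s$, so it preserves both summands and is trivial on the line. On $W$ it lies in the centralizer of an irreducible semisimple element, which is the multiplicative group $\mathbb F_{2^{\ell-1}}^{\times}$ of odd order, so $u|_W=I$. Hence $A=s$ has odd order, contradicting order $2m$.

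The remaining checks are the edge cases: $\ell\ge3$ is needed so that $W$ has dimension at least two and irreducibility is meaningful, and this is guaranteed by $D\ge5$. With these checked, the configuration of Lemma~\ref{lem:primitive} never occurs.
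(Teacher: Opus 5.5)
Your proof is correct, and it reaches the contradiction by a different mechanism from the paper's.

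The paper compares orders. It shows that $g$ and its linear part $M$ have the same order, at least $D-2$. It writes $M$ block-triangularly with quotient block $N$, so that this order is $n_0$ or $2n_0$. It then needs the maximal element order $2^{\ell-1}-1$ in $\mathrm{GL}(\ell-1,2)$, plus a side argument that an element of that order is a Singer cycle. Only after that does $(N-I)S=0$, with $S=\sum_{i<n_0}N^{i}$, give $M^{n_0}=I$.

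You get the Singer cycle for free: transitivity of $A$ on $V\setminus\langle v\rangle$ makes $\bar A$ transitive on the nonzero cosets, so no element-order bound enters. You pay for this with the Jordan decomposition and Schur's lemma for the centralizer of an irreducible element.

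Two points of wording. First, $s=A^{e}$ with $e$ even and $e\equiv1\pmod m$, so in fact $\bar s=\bar A$. Second, the splitting $V=\langle v\rangle\oplus W$ comes from the semisimplicity of $s$ (odd order), with $\ker(s-I)=\langle v\rangle$ because $\bar s$ fixes no nonzero coset. It does not come from $\ker(A-I)=\langle v\rangle$.

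More to the point, your quotient paragraph already finishes the proof. The map $L=A^{m}-I$ is linear, vanishes exactly on $\langle v\rangle$, and takes the value $v$ everywhere else. Its kernel $\langle v\rangle$ and image $\{0,v\}$ are both one-dimensional, so rank and nullity force $\ell=2$, against $\ell\ge3$. That is the argument of Lemma~\ref{lem:affine} in Section~\ref{sec:verify}, so your route is really a variant of the paper's element-order-free proof. Your Jordan step can be replaced either by this rank count or by the paper's own observation: $\bar A-I$ invertible gives $\sum_{i<m}\bar A^{i}=0$, hence $A^{m}=I$, contradicting order $2m$.
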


\begin{proof}
The case requires a member whose monodromy group of degree $D=2^{\ell}$ contains the $r$-cycle above
$F(j)$, which is a single cycle of length $r=D-2$ moving all but two of the $D$ points. That cycle is
a permutation induced by the group, and the group is affine, so the cycle is realized by an affine map
$g(x)=Mx+v$ of the space of $\ell$ coordinates over the field of two elements, with $M$ its linear
part.
Two properties of $g$ are in conflict.

First, $g$ fixes two points. If $g(p)=p$ then $v=(I-M)p$, and for every integer $k$,
\begin{equation*}
\begin{split}
g^{k}(p)&=M^{k}p+(I+M+\dots+M^{k-1})v\\
&=M^{k}p+(I-M^{k})p=p,
\end{split}
\end{equation*}
so the powers of $g$ fix $p$ as well. Taking $k$ equal to the order $m$ of $M$, where $M^{m}=I$, gives
$g^{m}(x)=x+(I+M+\dots+M^{m-1})v$, a pure translation that fixes $p$, hence the zero translation, hence
the identity. The order of $g$ then divides $m$, the order of its linear part, and in fact equals
it.

Second, $g$ is a single cycle of length $D-2$, so its order is at least $D-2$. Combining the two,
$m\ge D-2$. At $D=4$ the inversion equation has degree four and the member is solvable by (a) of
Theorem~\ref{thm:main}, so let $D\ge8$. The linear part $M$ fixes the difference of the two fixed
points of $g$, which is a nonzero vector. In a basis whose first vector is that one, $M$ is block
triangular, carrying the entry $1$, a row $b^{\top}$, and a block $N$ of dimension $\ell-1$, and the
power $M^{k}$ carries the entry $1$, the row $b^{\top}(I+N+\dots+N^{k-1})$, and the block $N^{k}$.
Write $n_0$ for the order of $N$. The matrix $M^{n_0}$ then differs from the identity only in that
row, and it squares to the identity, so $m$ is $n_0$ or $2n_0$. The largest order of an element of
the general linear group of dimension $\ell-1$ over the field of two elements is $D/2-1$, attained
by a Singer cycle~\cite{darafsheh}, so $m\le D-2$, with equality only when $n_0=D/2-1$ and
$m=2n_0$. An element of that order generates a Singer cycle and fixes no nonzero vector, since its
odd order makes it semisimple and a reducible semisimple element has order at most the least
common multiple of $2^a-1$ and $2^b-1$ with $a+b=\ell-1$, which is below $2^{\ell-1}-1$; so $N-I$ is
invertible. Writing $S=I+N+\dots+N^{n_0-1}$, the identity $(N-I)S=N^{n_0}-I=0$ gives $S=0$, hence
$M^{n_0}=I$ and $m=n_0$. So $m<D-2$, which contradicts $m\ge D-2$. No such $g$ exists, so the
open case is empty.
\end{proof}

\begin{proof}[Proof of Theorem~\ref{thm:main}]
For sufficiency, if $D\le4$ the Galois group has degree at most four and is solvable. If the
member is reciprocal, Section~\ref{sec:reciprocal} shows that $F$ factors through the degree-two map
$\eta\to\eta-1/\eta$ and that the outer factor is the inversion map of the reduced member composed
with a M\"obius map of the target, which leaves the monodromy group unchanged because it leaves
the field $\mathbb C(\sigma)$ unchanged; the group of
$F$ is then an extension of the group of the reduced map by an abelian group, and an extension of a
solvable group by an abelian group is solvable.

For necessity, let the group of a member be solvable and let $D\ge5$, with $r\ge2$. If $F$ is decomposable
then by Lemma~\ref{lem:decomp} the member is reciprocal, and its reduced member has the group of the
outer factor, which is a quotient of a solvable group, hence solvable. Every amplitude of the reduced
member is less than one, so that member is not reciprocal, and Lemma~\ref{lem:decomp} applied to it
makes its inversion map indecomposable. If its primitive degree were at least five,
Lemmas~\ref{lem:primitive} and~\ref{lem:exceptional} would leave it no solvable group. That degree is
therefore at most four, which is case (b). If $F$ is
indecomposable then Lemma~\ref{lem:primitive} applies, and Lemma~\ref{lem:exceptional} removes the
remaining case, so no such member exists. Together with $D\le4$ for the small degrees, this exhausts
the possibilities. Notice that $r=1$ gives $D=1$ under the primitive convention and is already
included.
\end{proof}

\begin{proof}[Proof of Corollary~\ref{cor:orders}]
A reduced member has softening factor parameters only, by Section~\ref{sec:reciprocal}, and a pair with product one
needs one hardening and one softening amplitude, so a reduced member is never reciprocal and the
recursion in Theorem~\ref{thm:main} stops after one step. For $r=1$ the primitive degree is one, giving
order $x^5$. For $r=2$ the reciprocal members with two amplitudes of product one have $D=2$,
giving $x^9$. For $r=3$, Lemma~\ref{lem:equal} gives $D\ge4$, and a member with $D=4$ is
constructed in Section~\ref{sec:explicit}, hence $x^{13}$. For $r=4$, Lemma~\ref{lem:equal} gives
$D\ge5$, so an unpaired member is not solvable; the reciprocal members constructed in
Section~\ref{sec:explicit} reduce to
degree three and gives $x^{17}$. If $r\ge5$ is odd, reciprocal pairing is impossible and
$D\ge r+1>4$. If $r\ge6$ is even and the member is reciprocal, its reduced member has $m=r/2$
softening factor parameters. For $m=3$ the inequalities of Section~\ref{sec:reciprocal} exclude
reduced degree four, and for $m\ge4$ Lemma~\ref{lem:equal} gives reduced degree at least $m+1\ge5$.
Thus no larger $r$ meets either alternative of Theorem~\ref{thm:main}. This proves the claimed list
within the class stated in the corollary.
\end{proof}

Figure~\ref{fig:ladder} summarizes the outcome, and Figure~\ref{fig:mono} shows the monodromy of two
members of the same degree at the seventeenth order, one solvable and one not.

\begin{figure*}[t]
\centering
\includegraphics[width=0.52\textwidth]{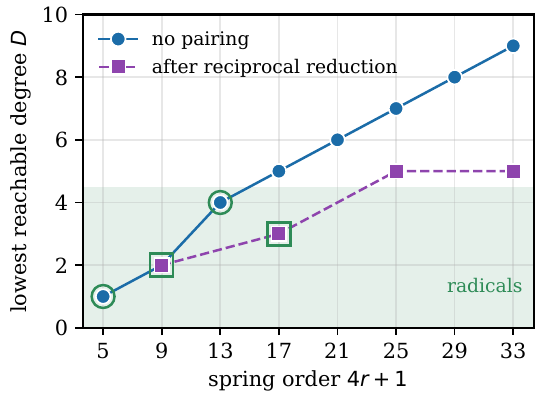}
\caption{Within the commensurate distinct-factor perfect-square family, the lower bound on the
primitive degree of a member of each order, without the reciprocal reduction and with it, attained
at the orders shown with outlined markers. The shaded band is the region where the Galois group is solvable for
every branch datum. The outlined markers are the orders that reach it, which are the orders with
radical inversions.}
\label{fig:ladder}
\end{figure*}

\begin{figure*}[t]
\centering
\begin{subfigure}{0.48\textwidth}\centering\includegraphics[width=\linewidth]{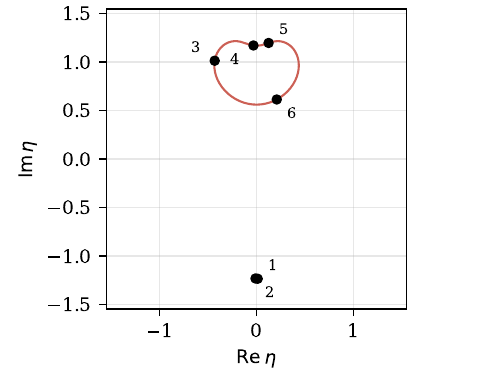}
\caption{}\end{subfigure}\hfill
\begin{subfigure}{0.48\textwidth}\centering\includegraphics[width=\linewidth]{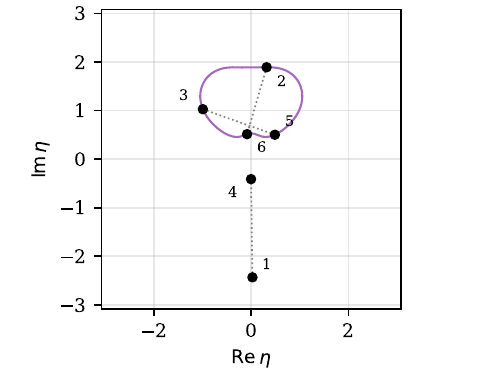}
\caption{}\end{subfigure}
\caption{Roots of the inversion equation of two seventeenth-order members of degree six, carried once
around the branch value $F(j)$. Dots mark the starting positions and the numbers label them. (a) An
unpaired member with weights $(2,-2,1,1)$: the displayed loop gives a four-cycle and two fixed roots,
which are mirror images across the imaginary axis and overlap in the plot.
(b) A reciprocal member with weights $(2,-1,-1,2)$: the dotted segments join the roots exchanged by
$\eta\to-1/\eta$. This single-loop plot illustrates the local permutations only; the solvability
statements follow from Theorem~\ref{thm:main}, not from the plot alone.}
\label{fig:mono}
\end{figure*}

\section{The closed forms}
\label{sec:explicit}

This section records the waveforms at the four solvable orders. In each case $\psi=u/h_0$ and
$\sigma=\tan\psi$, and the displacement is $x=A\eta/\sqrt{1+\eta^2}$ with $\eta$ the physical root.
The variable $\sigma$ does not retain the lift of $\psi$ modulo $\pi$. The physical root is therefore
selected by continuity from $\eta=0$ at the beginning of a quarter period, using the strictly
increasing original relation $u(\eta)$, and the remaining orbit is obtained by reflection and
oddness. At a tangent pole, at $\sigma=0$ on a different lift, or when an intermediate radical
denominator vanishes, the value means the continuous limit of that branch (equivalently, the root of
the lower-degree limiting equation). This prescription includes the turning-point limit
$\eta\to\infty$, $x\to A$, and prevents a change to a nonphysical algebraic branch.

\emph{Fifth order.} Here $r=1$, $h_0=1/\kap$ and the inversion is $\eta=\tan(\kap u)/\kap$, so
\begin{equation}
    x(t)=A\,\frac{\sin(\kap u)}{\sqrt{1+\lam\cos^2(\kap u)}},\qquad u=\sqrt\alpha\,t ,
    \label{eq:x5wave}
\end{equation}
with quarter period $U=\pi/(2\kap)$.

\emph{Ninth order.} Here $r=2$, and the lowest degree $D=2$ requires $|h_1|=|h_2|$, which by
\eqref{eq:arctan} means $|\lam_1|/\sqrt{1+\lam_1}=|\lam_2|/\sqrt{1+\lam_2}$. The function
$|\lam|/\sqrt{1+\lam}$ is strictly increasing for $\lam>0$ and strictly decreasing on $-1<\lam<0$, so
one factor hardens and the other softens. Writing $\lam_1=\lam$ and $\lam_2=-\nu$ the condition squares
to $(1+\lam)\nu^2+\lam^2\nu-\lam^2=0$, whose only root in $(0,1)$ is
\begin{equation}
    \nu=\frac{\lam}{1+\lam},\qquad\text{equivalently}\qquad(1+\lam)(1-\nu)=1 ,
    \label{eq:x9balance}
\end{equation}
so the amplitudes satisfy $\kap_1\kap_2=1$ and the member is reciprocal with $n=(1,1)$. The tangent
addition law gives $\sigma\eta^2+\rho\eta-\sigma=0$ with $\rho=\kap+1/\kap$, and dividing by
$\sigma\eta$ turns it into $\eta-1/\eta=-\rho/\sigma$, that is
\begin{equation}
    \tan2\theta=\frac{2}{\rho}\tan\psi ,
    \label{eq:x9wave}
\end{equation}
with the branch taken by continuity from $\theta=0$.

\emph{Thirteenth order.} Here $r=3$, the lowest degree is four by Lemma~\ref{lem:equal}, and the
admissible patterns of magnitude sum four are $(1,-1,2)$ and $(2,-1,1)$, since \eqref{eq:h2bound}
excludes $(1,-2,1)$. Nonemptiness is certified as follows. Fix $\lam_1=1/2$ and set
$\Phi(\lam_2,\lam_3)=(h_2+h_1,h_3-2h_1)$, with the $h_k$ from \eqref{eq:arctan}. On the box
\begin{equation}
    -0.127\le\lam_2\le-0.126,\qquad -0.184\le\lam_3\le-0.183 ,
\label{eq:x13box}
\end{equation}
take the center of \eqref{eq:x13box} as the reference point and the inverse of the Jacobian of $\Phi$
there as the preconditioner. Directed interval arithmetic encloses the Krawczyk operator of $\Phi$ in
\begin{equation}
    [-0.12652,-0.12614]\times[-0.18398,-0.18302] ,
\label{eq:x13image}
\end{equation}
strictly inside \eqref{eq:x13box}, so the
Krawczyk test gives a unique zero there, at $\lam_2\simeq-0.126330$ and
$\lam_3\simeq-0.183503$. At that zero
$h_2=-h_1$ and $h_3=2h_1$, so the primitive pattern is exactly $(1,-1,2)$.
The sum of the parameters is below one half, so this example also has positive linear stiffness. For this pattern
the product \eqref{eq:XY} is $(1+j\kap_1\eta)(1-j\kap_2\eta)(1+j\kap_3\eta)^2$, whose real
and imaginary parts are $X=1-c_2\eta^2+c_4\eta^4$ and $Y=c_1\eta-c_3\eta^3$, where $c_i$ is the $i$th
elementary symmetric function of the signed multiset $(\kap_1,-\kap_2,\kap_3,\kap_3)$. The inversion is
the quartic
\begin{equation}
    c_4\sigma\,\eta^4+c_3\,\eta^3-c_2\sigma\,\eta^2-c_1\,\eta+\sigma=0 ,
    \label{eq:quartic13}
\end{equation}
solved in radicals by Ferrari's formula. Writing \eqref{eq:quartic13} as
$g_4\eta^4+g_3\eta^3+g_2\eta^2+g_1\eta+g_0=0$, the depressed quantities are
\begin{equation}
    P_0=\frac{8g_4g_2-3g_3^2}{8g_4^2},\qquad Q_0=\frac{g_3^3-4g_4g_3g_2+8g_4^2g_1}{8g_4^3} ,
    \label{eq:pq13}
\end{equation}
the resolvent invariants are
\begin{equation}
\begin{split}
    \Delta_0&=g_2^2-3g_3g_1+12g_4g_0,\\
    \Delta_1&=2g_2^3-9g_3g_2g_1+27g_3^2g_0+27g_4g_1^2-72g_4g_2g_0 ,
\end{split}
    \label{eq:delta13}
\end{equation}
and, for generic $\sigma$ with $g_4R_0S_0\ne0$, with
\begin{equation}
\begin{split}
    R_0&=\Bigl(\tfrac12\bigl(\Delta_1+\sqrt{\Delta_1^2-4\Delta_0^3}\bigr)\Bigr)^{1/3},\\
    S_0&=\tfrac12\sqrt{-\tfrac23P_0+\tfrac{1}{3g_4}\bigl(R_0+\Delta_0/R_0\bigr)}
\end{split}
    \label{eq:QS13}
\end{equation}
the four roots are
\begin{equation}
    \eta=-\frac{g_3}{4g_4}\mp S_0\pm\tfrac12\sqrt{-4S_0^2-2P_0\pm\frac{Q_0}{S_0}} ,
    \label{eq:ferrari13}
\end{equation}
with the inner sign tied to the outer sign of $S$. If the displayed choice gives $Q=0$, the other
choice of sign in \eqref{eq:QS13} is used; if $S=0$ or $g_4=0$, the lower-degree limiting equation is
used instead. These are removable failures of the displayed Ferrari parametrization, not failures of
radical solvability. The physical root starts at the origin and increases according to the
continuation rule above. At $\psi=\pi/2$, division of \eqref{eq:quartic13} by $\sigma$ followed by the
limit $|\sigma|\to\infty$ gives $c_4\eta^4-c_2\eta^2+1=0$.

\emph{Seventeenth order.} Here $r=4$ and only reciprocal members are solvable. For the pattern
$(2,-1,-1,2)$, partial fractions in \eqref{eq:duxi} give a two-term law whose coefficients stand in the
ratio $2:-1$ when
\begin{equation}
\begin{split}
    \rho_2\bigl(\rho_1^2-4\bigr)&=2\rho_1\bigl(\rho_2^2-4\bigr),\\
    \rho_1&=\frac{\rho_2^2-4+\sqrt{\rho_2^4-4\rho_2^2+16}}{\rho_2} ,
\end{split}
    \label{eq:family17}
\end{equation}
a one-parameter family defined for every $\rho_2>2$, with $\rho_2<\rho_1<2\rho_2$. The merged phase is $\psi=\chi+\pi/2$ with
$\chi=2\tan^{-1}(\xi/\rho_1)-\tan^{-1}(\xi/\rho_2)$, so $\sigma=-1/\tan\chi$, and the product
\eqref{eq:XY} in the reduced variable gives
\begin{equation}
    \sigma\xi^3+(2\rho_1-\rho_2)\xi^2+\sigma\bigl(2\rho_1\rho_2-\rho_1^2\bigr)\xi+\rho_1^2\rho_2=0 .
    \label{eq:cubic17}
\end{equation}
For example, $\rho_2=2.03$ and \eqref{eq:family17} give $\rho_1\simeq2.06044$ and
\begin{equation}
(\lam_1,\lam_2,\lam_3,\lam_4)\simeq
(0.63309,0.41337,-0.29247,-0.38766),
\label{eq:x17example}
\end{equation}
whose weights are proportional to $(2,-1,-1,2)$ and for which
$2\sum_k\lam_k\simeq0.73265<1$. Thus the family is nonempty and includes members with positive linear
stiffness. Both $\rho_1<2\rho_2$ and $\rho_2<2\rho_1$ hold on the family, so
\begin{equation}
    \frac{d\chi}{d\xi}=\frac{\rho_1\rho_2\bigl(2\rho_2-\rho_1\bigr)+\bigl(2\rho_1-\rho_2\bigr)\xi^2}
    {\bigl(\rho_1^2+\xi^2\bigr)\bigl(\rho_2^2+\xi^2\bigr)}>0 ,
    \label{eq:chiprime}
\end{equation}
and $\chi$ increases from $-\pi/2$ to $\pi/2$ as $\xi$ runs along the real line. Therefore
$\sigma=-1/\tan\chi$ takes each finite nonzero real value exactly once, and for such $\sigma$ the
cubic \eqref{eq:cubic17} has exactly one real root. With
$B_2=(2\rho_1-\rho_2)/\sigma$, $B_1=2\rho_1\rho_2-\rho_1^2$, $B_0=\rho_1^2\rho_2/\sigma$
and
\begin{equation}
    C_1=B_1-\tfrac13B_2^2,\qquad
    C_0=\tfrac{2}{27}B_2^3-\tfrac13B_2B_1+B_0 ,
    \label{eq:dep17}
\end{equation}
the root is
\begin{equation}
\begin{split}
    \xi&=\sqrt[3]{-\tfrac{C_0}{2}+\sqrt{\Omega}}+\sqrt[3]{-\tfrac{C_0}{2}-\sqrt{\Omega}}
        -\frac{B_2}{3},\\
    \Omega&=\frac{C_0^2}{4}+\frac{C_1^3}{27} ,
\end{split}
    \label{eq:cardano17}
\end{equation}
where the real cube roots are taken, and $\eta=\tfrac12(\xi+\sqrt{\xi^2+4})$. At $\sigma=0$ on the
turning-point lift the real root is at infinity and \eqref{eq:cardano17} is interpreted by its limit.
For two softening factor parameters the shallower one always carries
the smaller coefficient, by the monotonicity of $|\lam|^{\,r-1}/\kap$ in
Section~\ref{sec:reciprocal}. The present construction with two pairs is sufficient for the
existence assertion in Corollary~\ref{cor:orders}; three pairs are excluded by the inequalities of
Section~\ref{sec:reciprocal}, and four or more pairs give reduced degree at least five and are
excluded by Theorem~\ref{thm:main}.

Figure~\ref{fig:waves} shows the two highest closed forms against a numerical integration of the
angle flow.

\begin{figure*}[t]
\centering
\begin{subfigure}{0.32\textwidth}\centering\includegraphics[width=\linewidth]{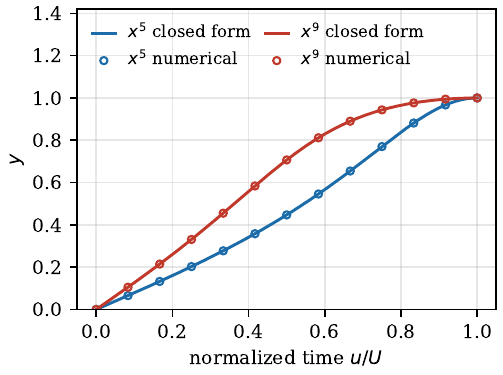}
\caption{}\end{subfigure}\hfill
\begin{subfigure}{0.32\textwidth}\centering\includegraphics[width=\linewidth]{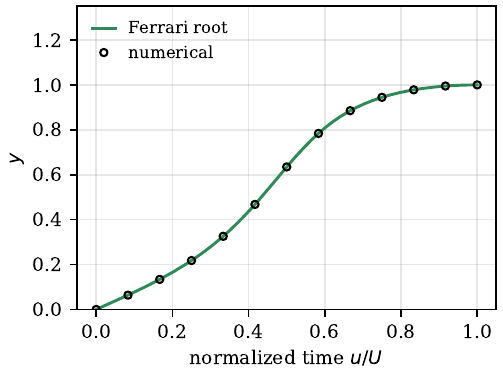}
\caption{}\end{subfigure}\hfill
\begin{subfigure}{0.32\textwidth}\centering\includegraphics[width=\linewidth]{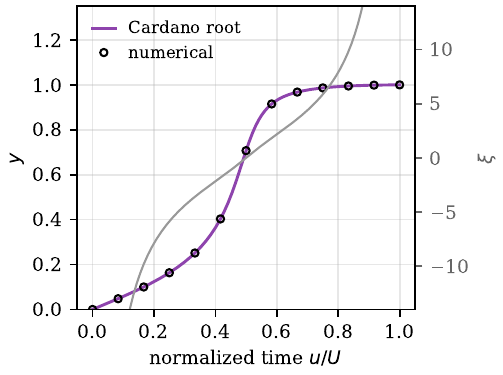}
\caption{}\end{subfigure}
\caption{Closed forms over a quarter period, with markers from a numerical integration of the angle
flow \eqref{eq:quad}. (a) The fifth and ninth orders, from \eqref{eq:x5wave} and \eqref{eq:x9wave}.
(b) A thirteenth-order member, from Ferrari's formula \eqref{eq:ferrari13}. (c) A
seventeenth-order member, from Cardano's formula \eqref{eq:cardano17}; the gray curve on the right
axis is the reduced variable $\xi$. The plots are illustrative and are not used in the proof.}
\label{fig:waves}
\end{figure*}

\section{The last configuration}
\label{sec:general}

Lemma~\ref{lem:primitive} leaves one configuration to rule out before Theorem~\ref{thm:main} holds for
every number of factor parameters, namely an indecomposable member whose inversion degree is a power of two,
$D=2^{\ell}$ with $r=D-2$, whose weights are two entries of magnitude two and $r-2$ entries of
magnitude one, and whose monodromy group is a solvable primitive subgroup of the affine group of
degree $D$ containing the $r$-cycle above $F(j)$, a single cycle of length $D-2$ with exactly two
fixed points. Lemma~\ref{lem:exceptional} shows that no such member exists, at any degree, by an
argument that needs only the affine form of the group and the cycle structure of an affine map.

The smallest members the configuration would have reached are the twenty-fifth and fifty-seventh
orders, and both are excluded along with every larger one.

\section{Consequences}
\label{sec:remarks}

\emph{Substituting a power of the displacement.} Passing to the variable $y=x^k$ cannot help. In the
equation of motion the substitution produces a term proportional to the kinetic energy and fractional
powers of $y$, so the result is not a polynomial spring. In the quadrature a change of variable does
not change the value of an integral, and elementarity is a property of the differential, not of its
name. What could change the problem is a factorization of the inversion map through a map of degree at
least two, and Lemma~\ref{lem:decomp} determines every such factorization: the only one available is
the reciprocal map of degree two. A factorization through $\eta\to\eta^k$ would need $du$ to be
invariant under $\eta\to\epsilon\eta$ with $\epsilon^k=1$, while the poles of $du$ lie on the imaginary
axis and a rotation by $\epsilon\ne\pm1$ moves them off it. In the algebra, finally, if $y=x^k$ were
expressible in radicals then so would be $x=y^{1/k}$, so a member whose group is not solvable stays
unsolvable under every relabeling of this kind.

\emph{Springs written down at will.} Any product of distinct nonzero factors
$1+\lam_kw$ with $\lam_k>-1$, squared, defines a perfect-square spring whose quadrature is elementary.
The merge criterion \eqref{eq:merge} imposes $r-1$ equations on $r$ factor parameters, so its nonempty
regular solution sets are locally one-dimensional; nonemptiness must be checked for each integer
pattern. A spring written down at random is elementary but implicit: its angle is defined by a
transcendental equation.
Among the commensurate members, Theorem~\ref{thm:main} decides which have waveforms in radicals, and
raising the degree of the polynomial never brings a formula back.

\emph{Repeated factor parameters.} A repeated factor parameter keeps the perfect square but adds a
rational function of $\eta$
to the inverse tangents in \eqref{eq:arctan}, so the inversion becomes a transcendental equation of
Kepler type rather than a polynomial one, and the question treated here does not arise.

\section{Elementary quadratures beyond the hypothesis}
\label{sec:beyond}

The example \eqref{eq:mixed} and the family \eqref{eq:mixedfam} show that the multiplicity
hypothesis of the perfect-square criterion is not a technical convenience. The mechanism behind
them is the pole structure of the quadrature differential, which also decides the smallest order
at which such members exist. Write the energy polynomial as
\begin{equation}
    P(w)=S(w)^{2}M(w) ,
    \label{eq:split}
\end{equation}
where $M$ is squarefree, a decomposition that is unique once a constant is absorbed; a zero shared
by $S$ and $M$ is a zero of $P$ of odd order at least three, and the arguments below assume that
$S$ and $M$ have no common zero. The perfect-square case is $M$ constant, and a polynomial with
zeros of mixed multiplicity is one in which $S$ and $M$ are both nonconstant. Substituting
\eqref{eq:split} into \eqref{eq:clock} and setting $B(w)=w(1-w)M(w)$ gives
\begin{equation}
    u(w)=\frac{1}{2}\int\frac{dw}{S(w)\sqrt{B(w)}} ,
    \label{eq:clockB}
\end{equation}
where $B$ is squarefree of degree $\deg M+2$ because neither $w$ nor $1-w$ divides $M$. Let
$\mathcal C$ be the smooth projective model of the curve $z^{2}=B(w)$, of genus equal to the
integer part of $(\deg B-1)/2$, let $\iota$ be the involution $(w,z)\mapsto(w,-z)$, and write
$\omega=dw/[2S(w)z]$ for the differential whose integral is \eqref{eq:clockB}. The involution
reverses the sign of $\omega$, and every pole of $\omega$ lies over a zero of $S$, a simple zero of
$S$ that is not a zero of $M$ producing one simple pole on each sheet. The theorem of Liouville,
in the form established by Ritt and sharpened by Rosenlicht and by
Risch~\cite{ritt48,rosenlicht,rosen68,risch69,risch70,bronstein}, states that an algebraic
differential has an elementary primitive precisely when it is the differential of an algebraic
function plus a constant combination of logarithmic derivatives. Applied to $\omega$ and combined
with its anti-invariance under $\iota$, it takes the following form.

\begin{theorem}
\label{thm:lr}
The integral of $\omega$ is elementary over the function field of $\mathcal C$ if and only if
there are a rational function $\varphi(w)$, nonzero functions $H_1,\dots,H_t$ on $\mathcal C$ and
constants $c_1,\dots,c_t$ with
\begin{equation}
    \omega=d\bigl[\varphi(w)\,z\bigr]+\sum_{i=1}^{t}c_i\,\frac{dH_i}{H_i},\qquad
    H_i\circ\iota=H_i^{-1} .
    \label{eq:lr}
\end{equation}
\end{theorem}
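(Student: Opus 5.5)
The plan is to derive Theorem~\ref{thm:lr} from the Liouville--Ritt--Rosenlicht theorem, applied over the function field $K=\mathbb C(w,z)$ of $\mathcal C$, and then to symmetrize with the involution $\iota$.

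\emph{Sufficiency.} This direction is immediate. If \eqref{eq:lr} holds, then a primitive of $\omega$ is $\varphi(w)z+\sum_ic_i\log H_i$, which is elementary over $K$.

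\emph{Necessity, first step.} The cited theorem~\cite{rosenlicht,bronstein} provides $g\in K$, nonzero $H_i\in K$ and constants $c_i$ with
\[
\omega=dg+\sum_ic_i\,dH_i/H_i .
\]
Replacing the $c_i$ by a $\mathbb Q$-basis of their span and recombining the $H_i$ multiplicatively, we may assume the $c_i$ are $\mathbb Q$-linearly independent.

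\emph{Necessity, averaging.} Apply $\iota^*$. Since $\iota^*\omega=-\omega$,
\[
-\omega=d(g\circ\iota)+\sum_ic_i\,d(H_i\circ\iota)/(H_i\circ\iota).
\]
Subtracting this from the original identity and halving gives
\[
\omega=d\Bigl[\tfrac12\bigl(g-g\circ\iota\bigr)\Bigr]+\sum_i\tfrac{c_i}{2}\,\frac{d\widetilde H_i}{\widetilde H_i},\qquad \widetilde H_i=\frac{H_i}{H_i\circ\iota}.
\]

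\emph{Necessity, identifying the pieces.} Every element of $K$ can be written as $a(w)+b(w)z$. The anti-invariant part $\tfrac12(g-g\circ\iota)$ is therefore $\varphi(w)z$ for a rational $\varphi$. Each $\widetilde H_i$ satisfies $\widetilde H_i\circ\iota=\widetilde H_i^{-1}$ by construction. It remains only to discard any $\widetilde H_i$ that is constant, since then $d\widetilde H_i=0$. This yields \eqref{eq:lr} with coefficients $c_i/2$.

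\emph{Main obstacle.} The delicate point is the precise form of Liouville's theorem being invoked. Over a field of constants $\mathbb C$ that is algebraically closed, Rosenlicht's version gives the logarithmic part with functions already in $K$, with no algebraic extension needed, and a $g$ in $K$. I would check that ``elementary over the function field'' in the statement matches this hypothesis. If instead one allows elementary extensions whose constants are $\mathbb C$, the conclusion still lands in $K$, because $K$ contains $\omega$ and its constant field is algebraically closed. No descent from a finite extension of $K$ is therefore required. If that citation were insufficient, the fallback is a trace argument: push the identity from a finite extension $L\supset K$ down by $\tfrac1{[L:K]}\mathrm{Tr}_{L/K}$ and use norms for the logarithmic terms. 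The $\iota$-averaging above then proceeds unchanged.
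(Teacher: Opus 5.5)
Your proof is correct and follows the same route as the paper: put $\omega$ in Liouville--Rosenlicht form over $\mathbb C(w,z)$, subtract its $\iota$-pullback and halve, then identify the anti-invariant exact part as $\varphi(w)z$ and the logarithmic terms as $H_i/(H_i\circ\iota)$. The preliminary normalization making the $c_i$ linearly independent over $\mathbb Q$ is harmless but not needed for this statement.
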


The proof is the classical one: writing $\omega$ in the Liouville form, subtracting its pullback
by $\iota$ and halving removes the invariant part, and every anti-invariant element of the
quadratic function field has the shape $\varphi(w)z$. The statement is recorded because the
remaining arguments specialize it. A nonconstant rational function on $\mathcal C$ has a pole, and
its differential has a pole of order at least two there, whereas a logarithmic derivative has only
simple poles. When $S$ has only simple zeros, none of them a zero of $B$, the differential
$\omega$ has only simple poles, so the exact term in \eqref{eq:lr} must vanish and
$\omega=\sum_{i=1}^{t}c_i\,dH_i/H_i$. The residue of a logarithmic derivative at any point is the
integer order of $H_i$ there. Choosing a basis over the rationals for the space spanned by the
residues of $\omega$ and clearing denominators in each coordinate produces integral divisors of
degree zero supported on the poles, and the quadrature is elementary exactly when some positive
integer multiple of each of them is principal, that is when each is torsion in the Jacobian, and
the holomorphic differential left after subtracting the corresponding logarithmic derivatives
vanishes. At a simple zero $a$ of $S$ with $B(a)\ne0$ the two points over $a$ carry the residues
$\pm1/[2S'(a)\sqrt{B(a)}]$. Two limiting cases place the criterion. If $M$ is constant the curve
has genus zero, nothing obstructs, and the quadrature is an inverse tangent, which is the
sufficiency of the perfect-square construction. If $S$ is constant then $\omega$ is holomorphic,
and a nonzero holomorphic differential on a compact curve of positive genus is neither exact nor
logarithmic, which is the necessity half of the criterion under the multiplicity hypothesis. Mixed
multiplicities are exactly the case in which both $S$ and $M$ are nonconstant, so that poles are
present and the torsion condition is the operative one. For the family \eqref{eq:mixedfam} the
residues at the two poles have ratio minus two and the weighted divisor of poles is principal,
which is what the pullback \eqref{eq:mixedpull} to a conic expresses. Torsion conditions of a
related kind govern the pseudo-elliptic integrals of Abel and Chebyshev~\cite{chebyshev,vdp},
where the integrand has no finite pole and the divisor in question is supported at
infinity.

\begin{proposition}
\label{prop:min}
Within the sequence in which the energy polynomial has degree $2r$ and the restoring force has
degree $4r+1$, the ninth force order is the lowest at which a nonsquare elementary member occurs,
and it is attained.
\end{proposition}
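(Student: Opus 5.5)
Two things need showing: that at the fifth order ($r=1$) no nonsquare member has an elementary quadrature, and that at the ninth order ($r=2$) such a member exists.

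\emph{The fifth order.} Here $\deg P=2$. A nonsquare $P$ either has two simple zeros or is a constant times a single linear factor; the latter degenerates the degree. Mixed multiplicities need a factor of multiplicity at least two together with a further factor, so they require $\deg P\ge3$. At $r=1$ the polynomial $P$ is therefore either a square or squarefree. In the squarefree case the curve $z^2=w(1-w)P(w)$ has genus one. By Theorem~\ref{thm:lr} with $S$ constant, $\omega$ is a nonzero holomorphic differential, which is neither exact nor logarithmic. So no nonsquare member at $x^5$ is elementary.

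\emph{The ninth order: construction.} Here $\deg P=4$, and the only mixed pattern without a shared zero is $P=S^2M$ with $S$ linear and $M$ quadratic squarefree, $S\nmid M$. Then $B=w(1-w)M$ has degree four, the curve $\mathcal C$ has genus one, and $\omega$ has simple poles at the two points $a_\pm$ over the zero $a$ of $S$, with opposite residues.

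By the specialization of Theorem~\ref{thm:lr}, $\omega$ is elementary exactly when two conditions hold:
\begin{itemize}
\item the divisor $a_+-a_-$ is torsion in the elliptic curve;
\item after subtracting $c\,dH/H$ for the corresponding function $H$, no holomorphic remainder is left.
\end{itemize}

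I would realize this by a pullback, mirroring \eqref{eq:mixedpull} one degree lower. Look for a quadratic map $T(w)$ with two properties. First, $T-c_1$ is divisible by $w\,M(w)/(\text{linear})$ up to a square. Second, $T$ pulls $dT/[T\sqrt{(T-c_1)(c_2-T)}]$ back to a constant multiple of $\omega$.

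The concrete plan is a degree-two map $T=w^2+bw$, or a Möbius-adjusted version, with
\[
c_2-T=(1-w)(w+e)\quad\text{and}\quad T-c_1=(w+f)(w+g)\cdot(\text{const}).
\]
Since a quadratic has only one critical point, the pullback squares only one linear factor. A cleaner route is to let $T$ be quadratic with $c_2-T=(w+e)^2$ times a constant. Then
\[
\frac{dT}{T\sqrt{(T-c_1)(c_2-T)}}=\frac{T'\,dw}{T\,(w+e)\sqrt{T-c_1}},
\]
with $T'$ proportional to $w+e$, which cancels. This gives $dw/[T\sqrt{T-c_1}]$. Choose $T-c_1=w(1-w)\cdot k$, and let $T$ factor as a linear factor squared times nothing, or adjust.

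The fitting of $T$ is where I expect the real work. I would solve for $T$ so that $T-c_1$ is proportional to $w(1-w)$, which fixes $T=c_1-kw(w-1)$, and so that $T$ itself equals $S\cdot(\text{linear})$. Then $\sqrt{T-c_1}$ supplies $\sqrt{w(1-w)}$, and the second factor of $T$ should become the squarefree $M$. This mismatch (needing $M$ of degree two under the root) means a cubic $T$, as in the paper's example, is probably required. I would therefore parametrize a cubic $T=(w+p)^2(w+q)$ or a related form with one double and one simple factor, adapted to $\deg P=4$, and solve the two identities analogous to \eqref{eq:mixedfam}.

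\emph{Verification.} The last step is to check three things for the resulting member: it is nonsquare, $P>0$ on $[0,1]$, and the roots are distinct from $0$ and $1$. The main obstacle is finding explicit parameters at this lowest degree. As a fallback, I would use the torsion criterion directly. Pick the elliptic curve $z^2=w(1-w)M(w)$ and a point $a$ with $a_+-a_-$ of order two or three, computed via the group law. Since $\omega$ minus a suitable logarithmic derivative is then holomorphic on genus one, tune the remaining parameter so that this holomorphic remainder vanishes.
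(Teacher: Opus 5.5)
Your fifth-order argument is the paper's, and it is fine once you add that $0$ and $1$ are not zeros of $P$. This holds because $P(0)=1$ and $P>0$ on the closed swing, so $B=w(1-w)P$ really is a squarefree quartic.

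The gap is at the ninth order. The proposition asserts that a nonsquare elementary member is attained there, and you never produce one.

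\begin{itemize}
\item Your quadratic attempt puts the critical value of $T$ at a branch value $c_2$ of the conic. Then $c_2-T$ is a square that cancels $T'$, and with $T-c_1\propto w(1-w)$ what remains is $dw/[T\sqrt{w(1-w)}]$. That is the perfect-square case $P\propto T^2$, so this placement can only return squares.
\item Your conclusion that a cubic is needed points the wrong way. For $T=(w+p)^2(w+q)$ one has $T'/T=(3w+p+2q)/[(w+p)(w+q)]$, so the pullback puts $S=(w+p)(w+q)$ in the denominator. It produces $\deg P=6$, the thirteenth-order example \eqref{eq:mixed}, not a degree-four $P$.
\item The torsion fallback is only a parameter count. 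It does not show that a real choice exists with $a_+-a_-$ torsion, vanishing holomorphic remainder, and $P>0$ on $[0,1]$.
\end{itemize}

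The missing idea is to spend the single critical point of a quadratic on the pole $T=0$ of $dT/T$, not on a branch value. Take $T=(w+p)^2$ with $p>0$. Then $dT/T=2\,dw/(w+p)$, so $S=w+p$ appears to the first power and $dw/[S\sqrt B]=\frac12\,dT/[T\sqrt B]$.

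Requiring $B$ to be a quadratic in $T$ that vanishes at $w=0$ and $w=1$ fixes the branch values $T(0)=p^2$ and $T(1)=(1+p)^2$. The factorizations $T-p^2=w(w+2p)$ and $(1+p)^2-T=(1-w)(w+2p+1)$ then give $M=(w+2p)(w+2p+1)$, hence $P\propto(w+p)^2(w+2p)(w+2p+1)$. This $P$ is positive on $[0,1]$, has multiplicities two, one, one, has $S$ and $M$ coprime, and is not a square.

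The quadrature is a constant multiple of $\int dT/\bigl[T\sqrt{(T-p^2)((1+p)^2-T)}\bigr]$, a single inverse tangent on the conic. In your language this is exactly the case where $2(a_+-a_-)$ is principal and the holomorphic remainder vanishes. But it is the explicit pullback, not the criterion, that proves attainment.
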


\begin{proof}
At the fifth force order the energy polynomial has degree two. If its two zeros coincide it is a
square after normalization. If they are distinct then $S$ is constant, $M$ is $P$ up to a
constant, and $B$ is a squarefree quartic whose zeros are the two zeros of $P$ together with $0$
and $1$, distinct from them because $P$ is positive on the closed swing; the curve has genus one
and $\omega$ is a nonzero holomorphic differential on it. At the ninth order take $S(w)=w+p$ with
$p>0$ and $T(w)=(w+p)^{2}$, so that $T'/T=2/S$ and
\begin{equation}
    \frac{dw}{S(w)\sqrt{B(w)}}=\frac12\,\frac{dT}{T\sqrt{B}} ,
    \label{eq:pullpower}
\end{equation}
which is a differential on a conic whenever $B$ is a quadratic polynomial in $T$. Since $B$
vanishes at $w=0$ and $w=1$ that quadratic is $-(T-p^{2})(T-(1+p)^{2})$, which gives
$M(w)=(w+2p)(w+2p+1)$ and the energy polynomial
\begin{equation}
    P(w)=\frac{(w+p)^{2}(w+2p)(w+2p+1)}{2p^{3}(2p+1)} ,
    \label{eq:ninth}
\end{equation}
with zero multiplicities two, one and one, positive on the closed swing and not a square. Its
quadrature is the single inverse tangent
\begin{equation}
    \int\frac{dT}{T\sqrt{(T-p^{2})\bigl((1+p)^{2}-T\bigr)}}
    =\frac{2}{p(1+p)}\,\tan^{-1}\!\left[\frac{1+p}{p}\sqrt{\frac{T-p^{2}}{(1+p)^{2}-T}}\,\right] ,
    \label{eq:arctan9}
\end{equation}
and the restoring force has degree nine with every coefficient positive exactly when
$p>(5+3\sqrt5)/4$, the linear coefficient being the one that binds.
\end{proof}

The construction in the proof runs unchanged with $T=(w+p)^{m}$ at every $m\ge2$ and gives a
nonsquare elementary member at every force order $4r+1$ with $r\ge2$, while the family
\eqref{eq:mixedfam} is one of the two branches of the cubic pullback $T=(w+p)^2(w+q)$, the other
being $q=4p$. The classification of the mixed-multiplicity members that this criterion admits,
through a finite critical-value condition on the pullback, is carried out in~\cite{p84}.

\section{Explicit radical orbits of the power pullback}
\label{sec:power}

The pullback $T=(w+p)^{m}$ of Section~\ref{sec:beyond} inverts in closed form at every $m$, which
turns the existence statement recorded there into an explicit waveform and settles the sign of
every force coefficient. This section also separates three questions that a single word can
conflate, namely whether the clock is algebraic at all, whether an algebraic clock inverts in
radicals, and how common the parameters that make the clock algebraic are. Time and displacement
are normalized as in \eqref{eq:energyG} throughout.

\emph{What algebraicity means here:} the first question concerns the clock alone. Fix a nonzero
real constant $h_*$ and take the branch through $u(0)=0$, so that
\begin{equation}
    Z(\eta)=\prod_{k=1}^{r}\left(\frac{1+j\kap_k\eta}{1-j\kap_k\eta}\right)^{h_k/h_*},
    \qquad \frac{Z-1}{j(Z+1)}=\tan\frac{u(\eta)}{h_*} .
    \label{eq:clockprod}
\end{equation}

\begin{proposition}
\label{prop:clock}
For the distinct-real-factor square family \eqref{eq:perfectsquare}, the germ of
$\tan[u(\eta)/h_*]$ is algebraic over $\mathbb C(\eta)$ if and only if every ratio $h_k/h_*$ is
rational, and $Z$ is a rational function of $\eta$ if and only if every such ratio is an integer.
A constant tangent clock over which the inverse displacement is algebraic therefore exists exactly
when the weights are commensurate.
\end{proposition}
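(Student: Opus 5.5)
The plan is to work with $Z$ directly. Since $\tan(u/h_*)=(Z-1)/[j(Z+1)]$ is a M\"obius image of $Z$, the germ of $\tan(u/h_*)$ is algebraic over $\mathbb C(\eta)$ exactly when $Z$ is, and it is rational exactly when $Z$ is. Write $e_k=h_k/h_*$ and $L_k(\eta)=(1+j\kap_k\eta)/(1-j\kap_k\eta)$, so that $Z=\prod_kL_k^{e_k}$ with the branch normalized by $Z(0)=1$. Because the amplitudes are positive and distinct, the $2r$ points $\pm j/\kap_k$ are distinct. Each $L_k$ therefore has a simple zero at $j/\kap_k$, a simple pole at $-j/\kap_k$, and no other zeros or poles in $\mathbb C$. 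Its value at infinity is $-1$, which is finite and nonzero.

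\emph{Sufficiency.} If every $e_k$ is an integer, $Z$ is a product of integer powers of rational functions and is rational. If every $e_k$ is rational, take a common denominator $N$. Then $Z^N=\prod_kL_k^{Ne_k}$ is rational, so $Z$ is a root of $T^N-Z^N\in\mathbb C(\eta)[T]$ and is algebraic.

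\emph{Necessity.} This is the main step, and the approach is to read off local exponents. Suppose $Z$ is algebraic and satisfies an irreducible polynomial relation over $\mathbb C(\eta)$. Near $\eta_0=j/\kap_k$ every factor $L_i$ with $i\neq k$ is holomorphic and nonvanishing, so $Z=(\eta-\eta_0)^{e_k}\,g(\eta)$ with $g$ holomorphic and nonzero at $\eta_0$.

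An algebraic function has a Puiseux expansion at every point, so each of its branches has the form $(\eta-\eta_0)^{a/b}\cdot(\text{unit in }(\eta-\eta_0)^{1/b})$ with rational exponent $a/b$. The continuation of our germ to a punctured neighborhood of $\eta_0$ is one such branch. Going once around $\eta_0$ multiplies it by $e^{2\pi je_k}$. The monodromy group of an algebraic function is finite, so $e^{2\pi je_k}$ must be a root of unity, which gives $e_k\in\mathbb Q$. Equivalently, one can compare $dZ/Z=\sum_ke_k\,dL_k/L_k$ with the logarithmic derivative of the Puiseux branch: the residue at $j/\kap_k$ is $e_k$ on one side and the rational exponent $a/b$ on the other.

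For the rationality statement, suppose $Z$ is rational. Then $dZ/Z$ has integer residues, since they are orders of zeros and poles. The residue of $\sum_ie_i\,dL_i/L_i$ at $j/\kap_k$ is $e_k$, because $dL_k/L_k$ has residue $1$ there and the other terms are holomorphic. Hence $e_k\in\mathbb Z$. The point to watch is that the points $j/\kap_k$ separate the factors, so that no cancellation between different $k$ can occur. This is exactly what distinctness and positivity of the amplitudes guarantee, as in the coprimality argument of Section~\ref{sec:family}.

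\emph{Commensurability.} For the final sentence, a constant $h_*$ with every $h_k/h_*$ rational exists exactly when the $h_k$ are pairwise commensurate. Choosing $h_*$ as a common submultiple makes all the ratios integers, which recovers \eqref{eq:merge} with $h_0=h_*$ and, up to a unit, the polynomial inversion \eqref{eq:inversion}. Conversely, if the inverse displacement is algebraic over the tangent clock, then $\eta$ and $\sigma=\tan(u/h_*)$ are algebraically dependent. Then $\tan(u/h_*)$ is algebraic over $\mathbb C(\eta)$, and the first part forces every ratio to be rational, so the weights are commensurate.
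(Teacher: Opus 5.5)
Your proposal is correct and follows the paper's own route. You read off the local exponent $h_k/h_*$ of $Z$ at $\eta=j/\kap_k$, isolated by distinctness of the amplitudes; finite local monodromy then forces rationality, integral zero and pole orders force integrality, and the M\"obius relation transfers both to $\tan(u/h_*)$. The one step left implicit in your final paragraph is the passage between $\eta$ and the displacement $y$, which the paper covers by noting that the quadratic substitutions $y=\eta(1+\eta^2)^{-1/2}$ and $\eta=y(1-y^2)^{-1/2}$ preserve algebraicity.
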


\begin{proof}
At $\eta=j/\kap_k$ the local exponent of $Z$ is $h_k/h_*$, and distinctness of the amplitudes
keeps every other factor finite and nonzero there. An algebraic function has finite local
monodromy, so $\exp(2\pi jh_k/h_*)$ has finite order, which for a real exponent is rationality.
Rational exponents in \eqref{eq:clockprod} return an algebraic function and integer exponents a
rational one, while a rational function has integral zero and pole orders, which gives the second
equivalence. The relation between $Z$ and the tangent is rational in each direction, a nonconstant
algebraic relation between $\eta$ and that tangent gives algebraicity either way, and the
quadratic substitutions between $\eta$ and $y$ preserve it.
\end{proof}

\begin{proposition}
\label{prop:branch}
In the commensurate square family, suppose the monodromy group of $F=Y/X$ is nonsolvable. Then no
germ of a generic physical displacement $y(u)$ is expressible in radicals over
$\mathbb C(\sigma)$.
\end{proposition}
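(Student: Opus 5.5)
The plan is to reduce the statement about the physical displacement $y(u)$ to the statement about the root $\eta$ of the inversion equation \eqref{eq:inversion}. Then nonsolvability of the monodromy group forbids a radical expression for \emph{every} root, hence for the physical one.

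The first step is to show that the displacement and the root generate the same field. Over the field $K=\mathbb C(\sigma)$ let $\eta_0$ denote the germ of the physical root. We have $y=\eta_0/\sqrt{1+\eta_0^2}$, so $\eta_0^2=y^2/(1-y^2)$ and $\eta_0=y/\sqrt{1-y^2}$. Consequently, if $y$ lies in a radical extension $L$ of $K$, then $\eta_0$ lies in the radical extension $L(\sqrt{1-y^2})$. It therefore suffices to show that $\eta_0$ lies in no radical extension of $K$.

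The second step is to pass from one root to all roots. The polynomial $Y-\sigma X$ is irreducible over $K$. It is linear in $\sigma$, and $X$ and $Y$ are coprime (Section~\ref{sec:family}), so Gauss's lemma applies in $\mathbb C[\sigma][\eta]$. Its splitting field $N$ is therefore the Galois closure of $K(\eta_0)$, and $\mathrm{Gal}(N/K)$ is the monodromy group of $F$ by the Riemann existence correspondence invoked in Section~\ref{sec:branch}. Now suppose $\eta_0\in L$ with $L/K$ radical. Then the normal closure $\tilde L$ of $L$ is again radical, hence solvable over $K$. Every conjugate of $\eta_0$ lies in $\tilde L$, so $N\subseteq\tilde L$. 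This makes $\mathrm{Gal}(N/K)$ a quotient of a solvable group, hence solvable, which contradicts the hypothesis.

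The point I expect to need care is the word \emph{generic} together with the germ formulation. A germ of $y(u)$ at a regular point $\psi$ is an element of the algebraic closure of $K$ only after choosing a base point away from the four marked values of Lemma~\ref{lem:wronskian}. The continuation rule of Section~\ref{sec:explicit} selects one branch there. I would fix $\sigma_0$ off the marked values and off the real tangent poles. The germs of the $D$ roots at $\sigma_0$ are then exactly the embeddings of $K(\eta_0)$ into the field of convergent Laurent germs, and the physical germ is one of them. A radical expression for that germ means that, at $\sigma_0$, it coincides with some branch of a radical expression. Analytic continuation along loops then realizes the whole monodromy group on the conjugates. This is what justifies treating the germ as a field element, and I would argue it explicitly via the Puiseux and Riemann existence identification rather than purely algebraically.
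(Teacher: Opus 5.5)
Your proposal is correct and follows essentially the same route as the paper. Both arguments get irreducibility of $Y-\sigma X$ from coprimality of $X$ and $Y$ together with Gauss's lemma, use that a radical root forces the whole splitting field into a solvable normal closure, and transfer a radical expression for $y$ to one for $\eta$ through the quadratic substitution $\eta=y(1-y^{2})^{-1/2}$ away from the turning points; your explicit handling of the germ and base-point issue is more detailed than the paper, which settles it with the phrase ``away from the turning points.''
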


\begin{proof}
Coprimality of $X$ and $Y$ makes $Y(\eta)-\sigma X(\eta)$ irreducible over $\mathbb C(\sigma)$,
since a factorization in $\mathbb C[\eta,\sigma]$ of degree one in $\sigma$ would carry a factor
free of $\sigma$ dividing both $X$ and $Y$, and Gauss's lemma transfers this to the quotient
field. Its degree in $\eta$ is $D$. Over a field of characteristic zero containing the roots of
unity, the normal closure of an extension generated by a radical expression has solvable Galois
group, so a radical root of this irreducible polynomial would force the splitting field of the
whole polynomial to have solvable group, against the hypothesis. The substitutions
$y=\eta(1+\eta^{2})^{-1/2}$ and $\eta=y(1-y^{2})^{-1/2}$ are quadratic, so a radical expression
for a physical germ of $y$ away from the turning points would give one for $\eta$.
\end{proof}

The three alternatives are therefore distinct. An incommensurate clock is algebraic over no
constant tangent-clock field, a commensurate clock gives an algebraic inversion that may still
fail to be solvable by radicals, and a solvable monodromy group gives an inversion in radicals.
These are statements about function germs over the stated fields, and they neither exclude
expressions built from other special functions nor exclude simple values at isolated times. Every
regular real orbit under discussion stays periodic in all three cases.

\begin{proposition}
\label{prop:measure}
For $r\ge2$ the commensurate parameters form a set of Lebesgue measure zero and of first category
in each open component of the ordered parameter domain of \eqref{eq:perfectsquare}.
\end{proposition}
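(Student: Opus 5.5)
The plan is to realize the commensurate set as a countable union of proper analytic subsets of the ordered parameter domain $\Lambda=\{\lam_1>\dots>\lam_r>-1,\ \lam_k\ne0\}$, one subset for each pattern of rational ratios. On each connected component of $\Lambda$ the coefficients $h_k$ of \eqref{eq:arctan} are real-analytic functions of $(\lam_1,\dots,\lam_r)$. No denominator $\lam_k-\lam_i$ or $\kap_k$ vanishes there. Commensurability means $h_k/h_1\in\mathbb Q$ for $k=2,\dots,r$, which forces $h_1\ne0$; a vanishing $h_k$ would give a zero weight, and that case can be added as an extra countable family of zero sets.

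The commensurate set is therefore contained in the union over $q\in\mathbb Q$ and $k\ge2$ of
\begin{equation*}
Z_{k,q}=\{\lam\in\Lambda:\ h_k(\lam)-q\,h_1(\lam)=0\},
\end{equation*}
together with the sets $\{h_1=0\}$ and $\{h_k=0\}$. Already a single relation, say the one for $k=2$, cuts out a countable union. For $r\ge2$ this is the point where the hypothesis enters: there is at least one ratio to constrain.

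The key step is to show that, for each fixed $q$ and on each component, the analytic function $f_q=h_2-q\,h_1$ is not identically zero. The zero set of a nonzero real-analytic function on a connected open set is closed, nowhere dense and of Lebesgue measure zero. This follows from the standard fact that it is locally contained in a finite union of submanifolds of positive codimension, or, more simply, by induction on dimension using Fubini and the isolatedness of zeros of one-variable analytic functions.

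A countable union of closed nowhere dense null sets is then both of first category and of measure zero, which is the claim. The same argument applied to $h_1$ and $h_k$ disposes of the zero-weight sets, using $\sum_k h_k=2U/\pi>0$ only to ensure that not all of them vanish.

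The main obstacle is proving $f_q\not\equiv0$ on every component, uniformly in $q$. My first attempt is an asymptotic argument. Let $\lam_1\to\infty$ with the other parameters fixed; this stays inside a component, since the component is determined by the sign pattern of the $\lam_k$ and increasing the largest one preserves it. From \eqref{eq:arctan}, $h_1\sim\lam_1^{r-1}/(\kap_1\lam_1^{r-1})\sim\lam_1^{-1/2}\to0$. Meanwhile $h_2=\lam_2^{r-1}/(\kap_2\prod_{i\ne2}(\lam_2-\lam_i))$ behaves like $C/(\lam_2-\lam_1)\sim-C/\lam_1$, which decays faster. So the ratio $h_2/h_1$ tends to $0$ like $\lam_1^{-1/2}$ and is a nonconstant analytic function; in particular it cannot equal the constant $q$, except possibly $q=0$. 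The case $q=0$ reduces to $h_2\not\equiv0$, which holds because $\lam_2\ne0$ makes $h_2$ nowhere zero.

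If the asymptotics prove awkward when $\lam_1$ is constrained, for instance in a component with all parameters softening, where $\lam_1<0$, I would instead let $\lam_2\to\lam_1$ from below. Then $|h_1|$ and $|h_2|$ both blow up like $1/|\lam_1-\lam_2|$ with opposite signs, and a second-order expansion shows the ratio is not locally constant. Nonconstancy of the single analytic ratio $h_2/h_1$ on each component suffices for every $q$ at once.
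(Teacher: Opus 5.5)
Your proposal is correct and is essentially the paper's proof: the commensurate set lies in the countable union of the analytic level sets $\{h_2=q\,h_1\}$ of the nowhere-vanishing weights, and on a component with $\lam_1>0$ the nonconstancy of $h_2/h_1$ comes from the same limit $\lam_1\to+\infty$, with $h_1\sim\lam_1^{-1/2}$ and $h_2=O(\lam_1^{-1})$. The one difference is the all-softening component, where the paper lets $\lam_1\to0^-$ so that $h_1\to0$ while $h_2$ stays finite and nonzero and the ratio diverges at once; your limit $\lam_2\to\lam_1$ only gives a ratio tending to $-1$, which does not by itself rule out a constant ratio, and the step you leave unstated closes via the exact identity $h_2/h_1=-g(\lam_2)/g(\lam_1)$ with $g(\lam)=\lam^{r-1}/\bigl(\sqrt{1+\lam}\,\prod_{i\ge3}(\lam-\lam_i)\bigr)$, whose logarithmic derivative is strictly negative on the softening range, so the ratio is strictly monotone in $\lam_2$ and a first-order term already suffices.
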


\begin{proof}
The weights $h_k$ are real analytic and nonzero on such a component, and the ratio $h_2/h_1$ is
not constant on it. If $\lam_1>0$, fix the remaining parameters and let $\lam_1\to+\infty$; then
$h_1$ decays like $\lam_1^{-1/2}$ while $h_2$ decays at least as fast as $\lam_1^{-1}$, so the
ratio tends to zero without ever vanishing. If every parameter is negative, let $\lam_1\to0^-$
instead, so that $h_1$ tends to zero to order $r-1$ while $h_2$ has a nonzero finite limit. For
each rational number the analytic set on which $h_2/h_1$ takes that value has measure zero and
empty interior, and commensurability lies in the countable union of those sets.
\end{proof}

Commensurability is in this sense arithmetically exceptional, which is a statement about exact
inversion rather than about the orbits themselves. The proposition does not assert that every
integer weight pattern is realizable, nor that the commensurate parameters are dense.

\emph{The power pullback inverted:} the construction of Section~\ref{sec:beyond} now runs
explicitly at every order.

\begin{theorem}
\label{thm:power}
Let $m\ge2$ be an integer, let $p>0$, and set
\begin{gather}
    a=p^{m},\quad b=(p+1)^{m},\quad \varrho=\frac ab,\quad K=mp^{m+1}(b-a),\quad T(w)=(w+p)^{m},
    \nonumber\\
    M(w)=\frac{[T(w)-a][b-T(w)]}{w(1-w)},\qquad
    P(w)=\frac{(w+p)^{2}M(w)}{K} .
    \label{eq:powerP}
\end{gather}
Then $P$ is a nonsquare polynomial of degree $2m$ with $P(0)=1$, positive on $[0,1]$, and the
force belonging to $G(w)=(1-w)P(w)$ has degree $4m+1$. Writing
\begin{equation}
    \psi=u\sqrt{\frac{m}{p(1-\varrho)}},\qquad
    \mathcal R=\bigl(\cos^{2}\psi+\varrho\sin^{2}\psi\bigr)^{1/m} ,
    \label{eq:powerangle}
\end{equation}
the orbit with $y(0)=0$ and $y'(0)=1$ is
\begin{equation}
    y=\sqrt{p(1-\varrho)}\,\sin\psi
    \Bigl[\,\mathcal R\sum_{k=0}^{m-1}\mathcal R^{\,k}\Bigr]^{-1/2} ,
    \label{eq:powerwave}
\end{equation}
with positive roots throughout. It is real analytic for every real $u$, has amplitude one and
minimal period $2\pi\sqrt{p(1-\varrho)/m}$, and is radical over $\mathbb C(\tan\psi)$.
\end{theorem}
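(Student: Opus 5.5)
The plan is to verify the algebraic claims about $P$ first and then integrate the quadrature through the pullback $T$, inverting the resulting single inverse tangent by hand.

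\emph{Properties of $P$.} Since $T(0)=a$ and $T(1)=b$, both $w$ and $1-w$ divide the numerator $[T(w)-a][b-T(w)]$, which has degree $2m$. Hence $M$ is a polynomial of degree $2m-2$ and $P$ has degree $2m$. The normalization $P(0)=1$ comes from expanding at $w=0$: $T(0)-a=0$ with $T'(0)=mp^{m-1}$, so $M(0)=mp^{m-1}(b-a)$, and $p^2M(0)/K=1$. On $(0,1)$ one has $a<T<b$, so $M>0$ there; at the endpoints $M(0)>0$ and $M(1)=T'(1)(b-a)>0$. Thus $P>0$ on $[0,1]$. The zeros of $T-a$ are $w=-p+p\epsilon$ with $\epsilon^m=1$, and those of $T-b$ are $-p+(p+1)\epsilon$. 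These are simple, and none equals $-p$, so $M$ is squarefree and shares no zero with $w+p$. Therefore $P=S^2M$ with $S$ and $M$ nonconstant, and $P$ is not a square. The force associated with $G(w)=(1-w)P(w)$ of degree $2m+1$ has degree $2(2m+1)-1=4m+1$.

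\emph{The quadrature.} By \eqref{eq:clockB}, with $S=w+p$, $B=w(1-w)M=(T-a)(b-T)$ and $dT/T=m\,dw/S$, we get
\[
du=\frac{\sqrt K}{2m}\,\frac{dT}{T\sqrt{(T-a)(b-T)}} .
\]
The substitution $T=a/(\cos^2\psi+\varrho\sin^2\psi)$ rationalizes this. Under it $T$ runs from $a$ at $\psi=0$ to $b$ at $\psi=\pi/2$. A direct computation gives
\[
\frac{dT}{T\sqrt{(T-a)(b-T)}}=\frac{2\,d\psi}{\sqrt{ab}} .
\]
Using $K=mp^{m+1}(b-a)$, $u$ is then a constant multiple of $\psi$. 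I will check that this constant reproduces \eqref{eq:powerangle} and the claimed normalization of the time variable, including $y'(0)=1$.

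\emph{Inverting for $w=y^2$.} From $w+p=T^{1/m}=p\,\mathcal R^{-1}$ we get $w=p(1-\mathcal R)/\mathcal R$. The identity
\[
1-\mathcal R^m=(1-\mathcal R)\sum_{k=0}^{m-1}\mathcal R^k
\]
together with $1-\mathcal R^m=(1-\varrho)\sin^2\psi$ gives
\[
w=\frac{p(1-\varrho)\sin^2\psi}{\mathcal R\sum_k\mathcal R^k},
\]
and taking the odd square root yields \eqref{eq:powerwave}. The remaining claims follow from this formula.
\begin{itemize}
\item[] \emph{Analyticity.} $\cos^2\psi+\varrho\sin^2\psi\ge\varrho>0$, so the positive $m$th root and the bracket are analytic and positive for all real $\psi$.
\item[] \emph{Amplitude.} The amplitude is one, attained at $\psi=\pi/2$, where $\mathcal R=\varrho^{1/m}=p/(p+1)$ and $w=1$.
\item[] \emph{Period.} $y$ has period $2\pi$ in $\psi$, since $\sin\psi$ changes sign under $\psi\to\psi+\pi$ while $\mathcal R$ is invariant. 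This gives the stated minimal period in $u$.
\item[] \emph{Radicality.} $\cos^2\psi$ and $\sin^2\psi$ are rational in $\sigma=\tan\psi$, and $\sin\psi=\sigma/\sqrt{1+\sigma^2}$, so $y$ is radical over $\mathbb C(\tan\psi)$.
\end{itemize}

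The real analytic formula agrees with the local solution near $\psi=0$. Since $\dot y^2=G(y^2)$ with simple turning points, it is the periodic orbit, by uniqueness for $\ddot y=\tfrac12\,d[G(y^2)]/dy$.

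\emph{Main obstacle.} I expect the hardest step to be the bookkeeping of constants in the time scaling $\psi\leftrightarrow u$. The factor $\sqrt K$, the $1/(2m)$, and $\sqrt{ab}=p^m(p+1)^m$ must combine exactly into $\sqrt{m/(p(1-\varrho))}$; $y'(0)=1$ in $u$ must hold; and the sign conventions along the branch through $y=0$ must be consistent. I would fix these by comparing the first-order expansion at $\psi=0$ directly.
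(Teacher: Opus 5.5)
Your proposal is correct and follows the paper's proof essentially step for step. You use the same pullback clock $du=\frac{\sqrt K}{2m}\frac{dT}{T\sqrt{(T-a)(b-T)}}$, the same conic inversion $T=ab/(b\cos^2\psi+a\sin^2\psi)$, the same rationalization through $1-\mathcal R^m=(1-\varrho)\sin^2\psi$, and the same analytic continuation from the first quarter to all real $u$. The constant bookkeeping you deferred closes at once, since $du=\sqrt K\,d\psi/(m\sqrt{ab})$ and $m^2ab/K=m(p+1)^m/\bigl[p\bigl((p+1)^m-p^m\bigr)\bigr]=m/\bigl[p(1-\varrho)\bigr]$, which together with $\mathcal R\to1$ and $\sum_k\mathcal R^k\to m$ gives $y'(0)=1$.
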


\begin{proof}
The two numerator factors of $M$ vanish simply at $w=0$ and at $w=1$, so the quotient is a
polynomial, and $T'=m(w+p)^{m-1}$ vanishes only at $w=-p$, where $T=0$ is neither $a$ nor $b$.
Hence $M$ is squarefree of degree $2m-2$ and nonzero at $-p$, so $P$ carries one double zero and
$2m-2$ simple ones. The unnormalized numerator takes the value $K$ at $w=0$, and monotonicity of
$T$ on $[0,1]$ gives positivity there, endpoints included. On the first quarter orbit the
normalized clock is
\begin{equation}
    du=\frac{\sqrt K}{2m}\,\frac{dT}{T\sqrt{(T-a)(b-T)}} ,
    \label{eq:powerclock}
\end{equation}
whose inversion on the conic is $T=ab[b\cos^{2}\psi+a\sin^{2}\psi]^{-1}$ with
$\psi=m\sqrt{ab}\,u/\sqrt K$, which is \eqref{eq:powerangle}. This gives
$y^{2}=p(\mathcal R^{-1}-1)$, and rationalizing $1-\mathcal R$ through
$1-\mathcal R^{m}=(1-\varrho)\sin^{2}\psi$ produces \eqref{eq:powerwave}. That expression has no
vanishing denominator, carries the correct sign across successive quarter periods, and has
derivative one at the origin. The energy identity of the first quarter extends analytically to all
real times, which gives the equation of motion. The positive excursion occupies $0<\psi<\pi$ with
its single maximum at $\psi=\pi/2$ and the negative excursion the next half cycle, which fixes the
minimal period. The inversion equation is
\begin{equation}
    (1+\varrho\tan^{2}\psi)(p+y^{2})^{m}-p^{m}(1+\tan^{2}\psi)=0 ,
    \label{eq:powerinverse}
\end{equation}
and one $m$th root followed by square roots expresses each of its roots, the displayed formula
being its continuous physical branch.
\end{proof}

At $m=2$ the energy polynomial \eqref{eq:powerP} is \eqref{eq:ninth}, and at $m=5$ it already
supplies a radical orbit for a force of degree twenty-one. The statement concerns the design orbit
and does not say that every orbit of the same spring is elementary. It stands beside
Corollary~\ref{cor:orders} rather than against it, since that corollary governs the
distinct-real-factor square family while \eqref{eq:powerP} is nonsquare.

\begin{corollary}
\label{cor:power}
If $p\ge2m$, every coefficient of the odd polynomial force belonging to \eqref{eq:powerP} is
strictly positive.
\end{corollary}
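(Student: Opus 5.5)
The plan is to reduce the sign question for the force to a sign question for the coefficients of one explicit polynomial, and then bound binomial ratios. By \eqref{eq:energyG}, $G_0(y^2)=1-2V(Ay)/(A^2\alpha)$ up to the normalization, so the restoring force $V'(Ay)$ is a positive multiple of $-\,\frac{d}{dy}G_0(y^2)=-2y\,G_0'(y^2)$. Here I take $\alpha>0$, as it is on a central libration. Hence every force coefficient is positive exactly when every coefficient of $w^k$, $k\ge1$, in $1-G_0(w)$ is positive, where $G_0(w)=(1-w)P(w)$ with $P$ from \eqref{eq:powerP}.

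\textbf{Step 1: an explicit polynomial.} Set
\begin{equation*}
N(w)=(w+p)^2\,[T(w)-a]\,[b-T(w)] .
\end{equation*}
Then $(1-w)P(w)=N(w)/(wK)$, with $N(0)=0$ and $N(w)/w\to K$ at $w=0$ (this is $P(0)=1$). The coefficient of $w^k$ in $1-G_0$ is therefore $-N_{k+1}/K$ for $k\ge1$. Since $K>0$, the claim reduces to showing that $N_j<0$ for every $2\le j\le 2m+2$, where $N_j$ denotes the coefficient of $w^j$ in $N$.

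\textbf{Step 2: binomial expansion.} Expanding $[T-a][b-T]=-T^2+(a+b)T-ab$ with $T=(w+p)^m$ gives
\begin{equation*}
N(w)=-(w+p)^{2m+2}+(a+b)(w+p)^{m+2}-ab\,(w+p)^2 .
\end{equation*}
With $q=(1+1/p)^m$, so that $b=aq$, divide $N_j$ by the positive quantity $p^{2m+2-j}$:
\begin{equation*}
\frac{N_j}{p^{2m+2-j}}=-\binom{2m+2}{j}+(1+q)\binom{m+2}{j}-q\binom{2}{j}.
\end{equation*}
For $j>m+2$ only the first term survives, so $N_j<0$ there. For $3\le j\le m+2$ it suffices to prove
\begin{equation*}
\binom{2m+2}{j}\Big/\binom{m+2}{j}>1+q .
\end{equation*}
For $j=2$ the extra term $-q$ helps, and the requirement becomes
\begin{equation*}
\binom{2m+2}{2}>\binom{m+2}{2}+q\Bigl[\binom{m+2}{2}-1\Bigr].
\end{equation*}

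\textbf{Step 3: using $p\ge 2m$.} The hypothesis enters only through
\begin{equation*}
q\le\Bigl(1+\frac{1}{2m}\Bigr)^m< e^{1/2}<1.65 .
\end{equation*}
The binomial ratio equals $\prod_{i=0}^{j-1}(2m+2-i)/(m+2-i)$. Each factor exceeds one and the factors increase with $i$, so the ratio is increasing in $j$. It therefore suffices to check $j=3$ against $2.65$ and $j=2$ separately.

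At $j=3$ the ratio is
\begin{equation*}
\frac{(2m+2)(2m+1)(2m)}{(m+2)(m+1)m}=\frac{4(2m+1)}{m+2}\ge 5 .
\end{equation*}
At $j=2$ the requirement becomes
\begin{equation*}
(m+1)(2m+1)>\frac{(m+2)(m+1)}{2}\,(1+q)-q ,
\end{equation*}
which holds with room to spare for $q<1.65$: at $m=2$ it reads $15>6+5q$, and the margin grows with $m$.

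\textbf{Main obstacle.} The only tight spot is $j=2$ at small $m$, which is the linear stiffness. There the general ratio bound $2(2m+1)/(m+2)$ alone is not enough, and the $-q$ term must be kept. I would handle $m=2$ (and perhaps $m=3$) by direct evaluation, and all larger $m$ by the inequality above using $q<e^{1/2}$.
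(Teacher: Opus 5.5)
Your argument is correct, and it takes a different route from the paper. The paper never expands $(1-w)P$. It factors $M=A_1A_2$ into sums of powers of $w+p$ with positive coefficients, so the numerator of $P$ is a nonnegative combination of the powers $(w+p)^i$ with $i\le2m$. A mediant bound $d_{k+1}/d_k\le(2m-k)/[(k+1)p]$ on the coefficients of $P=\sum_kd_kw^k$ then shows that the $d_k$ strictly decrease once $p\ge2m$, and the force $\sum_k(k+1)(d_k-d_{k+1})y^{2k+1}$ is positive termwise. That argument involves no cancellation and is uniform in $m$, but it gives only a sufficient condition. You instead write $wK(1-w)P$ as the signed three-term combination $-(w+p)^{2m+2}+(a+b)(w+p)^{m+2}-ab(w+p)^2$. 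This gives every force coefficient in closed form, at the cost of having to control the cancellation through $q=(1+1/p)^m$. The payoff is sharpness. The binomial ratio increases in $j$, and its value $4(2m+1)/(m+2)$ at $j=3$ exceeds $1+3(m+1)/(m+3)$. So your closed form shows that all force coefficients are positive if and only if the linear one is, that is, if and only if $q<3(m+1)/(m+3)$. At $m=2$ this is exactly the threshold $p>(5+3\sqrt5)/4$ of Proposition~\ref{prop:min}, and for large $m$ it requires only about $p>m/\ln3\approx0.91\,m$ rather than $p\ge2m$. The same simplification also removes your small-$m$ case split. Since $3(m+1)/(m+3)\ge9/5>e^{1/2}$ for every $m\ge2$, the $j=2$ inequality follows in one line from $q<e^{1/2}$. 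Your unproved remark that ``the margin grows with $m$'' is then not needed.
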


\begin{proof}
Factor $M=A_1A_2$ with
\begin{equation}
\begin{split}
    A_1(w)&=\frac{(w+p)^{m}-p^{m}}{w}=\sum_{k=0}^{m-1}p^{k}(w+p)^{m-1-k},\\
    A_2(w)&=\frac{(p+1)^{m}-(w+p)^{m}}{1-w}=\sum_{k=0}^{m-1}(p+1)^{m-1-k}(w+p)^{k} .
\end{split}
    \label{eq:powerfactor}
\end{equation}
The numerator of $P$ is then a polynomial in $w+p$ with nonnegative coefficients, of degree $2m$,
carrying at least two nonzero coefficients of different degrees. Writing
$P(w)=\sum_{k=0}^{2m}d_kw^{k}$ and comparing consecutive coefficients inside each shifted monomial
gives $0<d_{k+1}/d_k\le(2m-k)/[(k+1)p]$ for $0\le k<2m$. For $p\ge2m$ that ratio is below one when
$k\ge1$, and also when $k=0$, including at $p=2m$, because some nonzero coefficient of the shifted
numerator has degree below $2m$. With $d_0=1$ and $d_{2m+1}=0$ the force is
$\sum_{k=0}^{2m}(k+1)(d_k-d_{k+1})y^{2k+1}$ with every difference positive.
\end{proof}

The bound is sufficient and not necessary, and it does not replace the sharper ninth-order bound of
Proposition~\ref{prop:min}. What it gives is an explicit infinite sequence of confining, strictly
hardening springs whose design orbit is radical.

One quantity attached to the design orbit is rational whenever the factor parameters are, whether
or not the clock is commensurate and whether or not its inversion is radical. On the upper half of
the design orbit the velocity is $\sqrt{1-y^{2}}\prod_{k=1}^{r}(1+\lam_ky^{2})$, with every factor
positive, so writing $\prod_{k=1}^{r}(1+\lam_kw)=\sum_{k=0}^{r}d_kw^{k}$ and using
$\int_{0}^{1}y^{2k}\sqrt{1-y^{2}}\,dy=\pi C_k/4^{k+1}$ with $C_k=\binom{2k}{k}/(k+1)$ gives the
action of the design orbit as
\begin{equation}
    \frac{1}{2\pi}\oint y'\,dy=\frac12\sum_{k=0}^{r}\frac{C_kd_k}{4^{k}} .
    \label{eq:designaction}
\end{equation}

\section{An elementary route through the group-theoretic step}
\label{sec:verify}

The exclusion of a solvable monodromy group in Section~\ref{sec:groups} rests on a bound for the
order of an element of a general linear group. That bound can be avoided. The argument below
reaches the same conclusion from a fixed-space obstruction whose proof is self-contained, which
makes the classification independent of any element-order bound. The affine restriction used in it
is classical and is also a consequence of the cycle classification of~\cite{jones}; no
classification of finite groups is claimed here.

\begin{lemma}
\label{lem:affine}
Let $q$ be a prime and let an affine permutation of a finite vector space $V$ over $\mathbb F_q$
have exactly one nontrivial cycle, of length $s>1$, together with $f\ge2$ fixed points. Then
$q=2$, the space is two dimensional, and $f=s=2$.
\end{lemma}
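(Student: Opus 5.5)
The plan is to linearize, use the centralizer of the single cycle to force $q=2$ and $f=2$, and then read off the dimension from one linear map.

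\textbf{Linearize.} Write the permutation as $g(x)=Mx+v$ on $V=\mathbb F_q^{\,n}$. As in the proof of Lemma~\ref{lem:exceptional}, its fixed points satisfy $(I-M)p=v$, so they form an affine subspace. Its size $f$ is therefore $q^{d}$, and $f\ge2$ gives $d\ge1$. Translate coordinates so that one fixed point is the origin. Then $g=M$ is linear, and the fixed set is the subspace $U=\ker(M-I)$ of dimension $d$. The hypothesis now says that $V\setminus U$ is a single $M$-orbit of length $s=q^{n}-q^{d}$. In particular $U\ne V$, so $n>d\ge1$.

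\textbf{Centralizer step: forcing $q=2$ and $d=1$.} Consider the maps $\tau_{\lambda,u}(x)=\lambda x+u$ with $\lambda\in\mathbb F_q^{*}$ and $u\in U$.
\begin{itemize}
\item Each commutes with $M$, because $Mu=u$.
\item Each preserves $V\setminus U$: if $\lambda x+u\in U$ then $x\in U$.
\item Each acts freely there when nontrivial: $\lambda x+u=x$ forces $x=u/(1-\lambda)\in U$ when $\lambda\ne1$, and is impossible when $\lambda=1$, $u\ne0$.
\end{itemize}
So the group $\{\tau_{\lambda,u}\}$, of order $q^{d}(q-1)$, acts faithfully on the cycle and commutes with the restriction of $g$ to it. The centralizer in $\mathrm{Sym}(V\setminus U)$ of a full cycle is the cyclic group it generates. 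Hence this group must be cyclic, and in particular abelian.

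The group contains the relation $\tau_{\lambda,0}\tau_{1,u}\tau_{\lambda,0}^{-1}=\tau_{1,\lambda u}$. Abelianness therefore gives $\lambda u=u$ for every $\lambda$ and every $u\in U$, which forces $q=2$. Next, the translations by $U$ form an elementary abelian $2$-group inside a cyclic group, so $d=1$. Together these give $f=2$, $U=\{0,u\}$ and $s=2^{n}-2$.

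\textbf{Dimension count: forcing $n=2$.} Translation by $u$ lies in the cyclic group generated by $g$ on the cycle. Since it has order two, it is $g^{k}$ restricted to the cycle, with $k=s/2$. Now $M^{k}$ is linear and is the identity on $U$. Thus $N=M^{k}-I$ is a linear map with
\[
N(V)\subseteq U,\qquad N|_U=0,\qquad N(x)=u \text{ for every } x\notin U .
\]
Hence $\ker N=U$ exactly, and rank--nullity gives $\operatorname{rank}N=n-1$. But $N(V)\subseteq U$ has dimension at most one, so $n-1\le1$. Since $n>d=1$, this gives $n=2$, and then $s=2^{2}-2=2$.

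\textbf{Main obstacle.} The delicate step is the centralizer argument. One has to check that each $\tau_{\lambda,u}$ really is fixed-point-free on $V\setminus U$, so that the group acts faithfully on the cycle. One also has to invoke the fact that a permutation commuting with a full cycle on its support is a power of that cycle. Once that is in place, the final rank count is routine.
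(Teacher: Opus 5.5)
Your proof is correct and follows essentially the same route as the paper's: you linearize at a fixed point, embed the translations by $U=\ker(M-I)$ in the cyclic centralizer of the cycle to get $\dim U=1$, identify a power of $M$ with translation by $u$ on the cycle, and finish with rank--nullity for that power minus the identity. The one difference is how you reach $q=2$: you adjoin the scalar maps $x\mapsto\lambda x$, so the commuting group is nonabelian unless $q=2$. The paper instead reads $q=2$ off the fact that the image of $A^{s/q}-I$ is a two-element subspace, so that single linear map gives both $q=2$ and $\dim V=2$.
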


\begin{proof}
Translating a fixed point to the origin makes the permutation a linear map $A$. Put
$U=\ker(A-I)$, so that $U$ has $f\ge2$ elements, and let $V_1$ be its complement in $V$, on which
$A$ restricts to a single $s$-cycle. Translation by an element of $U$ preserves $V_1$ and commutes
with $A$, and it acts faithfully on $V_1$ because a nonzero translation has no fixed point. The
centralizer of an $s$-cycle in the symmetric group on its support is cyclic of order $s$, since a
commuting permutation is determined by the image of one point and is therefore a power of the
cycle. The elementary abelian translation group of $U$ thus embeds in a cyclic group, which forces
$U$ to be one dimensional, $f=q$, and $q$ to divide $s$.

The subgroup of order $q$ inside that cyclic centralizer is unique, so the restriction of
$A^{s/q}$ to $V_1$ is translation by some nonzero element of $U$, while on $U$ that power is the
identity. The linear map $L=A^{s/q}-I$ therefore vanishes on $U$ and is constant and nonzero on
$V_1$, so its image has exactly two elements. An image is a subspace over $\mathbb F_q$, which
leaves only $q=2$. The rank of $L$ is then one and its kernel is exactly $U$, of dimension one, so
rank and nullity give dimension two. The values of $f$ and $s$ follow.
\end{proof}

\begin{proposition}
\label{prop:primitive}
An indecomposable inversion map of the distinct-factor square family cannot have solvable
monodromy when $D\ge5$.
\end{proposition}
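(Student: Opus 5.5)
The plan is to rerun the proof of Lemma~\ref{lem:primitive} up to the point where the group is known to be affine, and then to replace Lemma~\ref{lem:exceptional}, together with its appeal to element orders in the general linear group, by Lemma~\ref{lem:affine}.

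First step: fix the setting. Let $F$ be indecomposable with $D\ge5$ and suppose its monodromy group is solvable. Indecomposability makes the group primitive~\cite{ritt22}. By Lemma~\ref{lem:wronskian} the group contains the permutation above $F(j)$, which is an $r$-cycle with $f=D-r$ fixed points. It also contains the permutation above $\sigma=j$, which has exactly $r$ cycles, of lengths $|n_k|$.

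Second step: case split on $f$, exactly as in Lemma~\ref{lem:primitive}.
\begin{enumerate}
\item[\textup{(i)}] If $f\ge3$, a primitive group containing a cycle with at least three fixed points contains the alternating group~\cite{jones}. That group is not solvable for $D\ge5$.
\item[\textup{(ii)}] If $f=0$, then $D=r$. Lemma~\ref{lem:equal} excludes this for $r\ge3$, and $r\le2$ would give $D\le2$.
\item[\textup{(iii)}] If $f=1$, the permutation above $j$ has $D-1$ cycles on $D$ points, so it is a transposition. The group is then the full symmetric group, which is not solvable.
\item[\textup{(iv)}] If $f=2$, the permutation above $j$ has $D-2$ cycles. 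It is therefore either a $3$-cycle, which forces the alternating group~\cite{dixon}, or a product of two transpositions.
\end{enumerate}
The only case left is $f=2$ with the double transposition. There the group is solvable and primitive, so $D=q^{\ell}$ for a prime $q$ and the group acts as affine maps of $\mathbb F_q^{\ell}$~\cite{dixon}.

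Third step, which is the main point: apply Lemma~\ref{lem:affine} to the permutation above $F(j)$. That permutation is an affine map of $\mathbb F_q^{\ell}$. It has exactly one nontrivial cycle, of length $s=r=D-2>1$, and $f=2$ fixed points. The lemma then gives $q=2$, a two-dimensional space, and $s=2$. Hence $D=4$, which contradicts $D\ge5$.

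The only care needed is to check the lemma's hypotheses exactly. The cycle must be the unique nontrivial cycle, and Lemma~\ref{lem:wronskian} guarantees this because every sheet other than the $r$ sheets through $\eta=j$ is unramified. The fixed-point count $f=2$ must also be exact, which holds because $r=D-2$. I expect no real obstacle here; the bulk of the work sits in Lemma~\ref{lem:affine}, which is assumed. The remaining subtle input is the classical fact that a solvable primitive group acts affinely, and as noted at the start of Section~\ref{sec:verify} this also follows from~\cite{jones}.

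Conclusion: every case ends in either nonsolvability or $D\le4$, so no indecomposable member with $D\ge5$ has solvable monodromy.
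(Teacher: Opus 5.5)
Your proof is correct and uses the same ingredients as the paper's: primitivity, the $r$-cycle above $F(j)$ with $f=D-r$ fixed points, Lemma~\ref{lem:equal} for $f=0$, the transposition argument for $f=1$, and Lemma~\ref{lem:affine}. The difference is in how these are ordered.

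You keep the case split of Lemma~\ref{lem:primitive}. So you still invoke the cycle theorem of~\cite{jones} for $f\ge3$, and the $3$-cycle theorem for one branch of $f=2$. You then use Lemma~\ref{lem:affine} only as a replacement for Lemma~\ref{lem:exceptional}, in the residual double-transposition case.

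The paper instead proves first, inline, that a solvable primitive group is affine, since a minimal normal subgroup is elementary abelian and regular. It then applies Lemma~\ref{lem:affine} to every $f\ge2$ at once, which gives $D=4$ directly. The case $f\ge3$ and the $3$-cycle subcase therefore never need separate treatment, and the transposition fact is likewise proved inline.

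The paper's ordering buys self-containedness, which is the point of Section~\ref{sec:verify}. The theorem of~\cite{jones} rests on the classification of finite simple groups. Your version removes the element-order bound of~\cite{darafsheh}, but it keeps the dependence on that heavier input, which Lemma~\ref{lem:affine} already makes unnecessary. Your ordering has its own merit: it shows exactly where Lemma~\ref{lem:affine} is needed, namely as a short substitute for Lemma~\ref{lem:exceptional} inside the original proof of Lemma~\ref{lem:primitive}.
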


\begin{proof}
Write $\Gamma$ for the monodromy group. A rational map is indecomposable exactly when $\Gamma$ is
primitive, since a nontrivial block system gives an intermediate function field, rational by
L\"uroth's theorem and hence a rational decomposition, while the fibers of an inner map give the
converse~\cite{ritt22,volklein}. A solvable faithful primitive group is affine. Indeed a minimal
nontrivial normal subgroup $\mathcal N$ is elementary abelian, because its derived subgroup, its
primary components and its subgroup of elements killed by a prime are all characteristic; the
orbits of $\mathcal N$ are blocks, so $\mathcal N$ is transitive; a point stabilizer in an abelian
transitive subgroup fixes every point and is trivial by faithfulness. Thus $\mathcal N$ is
regular, identifies the underlying set with a vector space over a prime field, and its normalizer
acts by affine maps~\cite{dixon}.

The branch data supply an $r$-cycle with $f=D-r$ fixed points, and $r\ge2$ because $r=1$ has
primitive degree one. The case $f=0$ is excluded for $D\ge5$ by Lemma~\ref{lem:equal}. If
$f\ge2$, Lemma~\ref{lem:affine} forces $D=4$. If $f=1$, the permutation over the marked value $j$
has $D-1$ cycles on $D$ points, so its cycle-length defect is one and it is a transposition. A
primitive permutation group containing a transposition is the full symmetric group, since the
graph whose edges are the conjugates of that transposition has components forming blocks,
primitivity makes the graph connected, and the edge transpositions of a connected graph generate
the symmetric group. For $D\ge5$ that group is not solvable, and every case is excluded.
\end{proof}

The classification follows from this without an element-order bound. If the monodromy is solvable
and $D\ge5$, Proposition~\ref{prop:primitive} forces a decomposition, which
Lemma~\ref{lem:decomp} makes reciprocal with inner degree two, and
Section~\ref{sec:reciprocal} identifies the outer map, up to a target M\"obius transformation,
with the reduced member of degree $D/2$. The monodromy of that reduced member is a quotient of
$\Gamma$ and is solvable, and the reduced member carries softening amplitudes only, so it is not
reciprocal and the same decomposition argument makes it indecomposable;
Proposition~\ref{prop:primitive} then bounds its degree by four. Conversely a reciprocal map whose
reduced degree is at most four is solvable, because the block action of its monodromy is the
reduced monodromy group and the kernel of that action embeds in a direct product of groups of
order two acting inside the two-point blocks, so the extension is solvable. Equivalently, solve the
reduced equation in radicals and then solve $\eta^{2}-\xi\eta-1=0$. This gives both directions of
Theorem~\ref{thm:main} and an independent exclusion of the affine configuration of
Lemma~\ref{lem:exceptional}.

\section{Conclusion}

A commensurate distinct-factor member of the perfect-square family has an inversion in radicals over
$\mathbb C(\sigma)$ exactly when its primitive inversion degree is at most four, or when its amplitudes
form reciprocal pairs and the reduced member has primitive degree at most four. The reduction applies
once and no more, since a reduced member carries softening amplitudes only and can never itself be
reciprocal. The criterion therefore leaves radical orbits at the fifth, ninth and thirteenth force
orders, and at the seventeenth only for reciprocal members. The mixed-multiplicity example
\eqref{eq:mixed} and the one-parameter family it belongs to show why this is not an exhaustive
classification of all elementary polynomial springs, and Section~\ref{sec:beyond} places those
members: their quadrature is a differential of the third kind whose elementarity is a torsion
condition on the divisor of its poles together with a vanishing holomorphic remainder, and within
the orders congruent to one modulo four they begin at the ninth. Theorem~\ref{thm:power} inverts
the power pullback explicitly at every one of those orders and Corollary~\ref{cor:power} fixes
the sign of every force coefficient, so radical design orbits occur for confining springs of
arbitrarily high degree outside the square family. Within that family the clock is algebraic only
for commensurate weights, which are exceptional in the parameter domain, and an algebraic clock
still need not invert in radicals.

Explicit periodic orbits are rare among nonlinear oscillators, and whether one exists is usually
settled case by case. For this family the question has an algebraic answer, and the answer marks
where the radical inversions stop inside the stated class. Past that boundary the selected libration
stays periodic and the quadrature stays elementary. Only the step from time back to displacement
ceases to be solvable by radicals over the specified base field.

\begin{acknowledgments}
AI tools were used mainly for editorial assistance including proofreading, literature search and
language refinement. All theoretical developments, derivations and conclusions are the author's work.
\end{acknowledgments}

\section*{Data availability}
No data were created or analyzed in this study.

\end{document}